\documentclass[11pt,oneside,english]{amsart}
\usepackage[latin9]{inputenc}
\setlength{\parskip}{\smallskipamount}
\setlength{\parindent}{0pt}
\usepackage{amstext}
\usepackage{amsthm}
\usepackage{amssymb}
\usepackage{graphicx}
\usepackage{setspace}
\onehalfspacing

\makeatletter
\numberwithin{equation}{section}
\numberwithin{figure}{section}
\theoremstyle{plain}
\newtheorem{thm}{\protect\theoremname}
  \theoremstyle{plain}
  \newtheorem{cor}[thm]{\protect\corollaryname}
  \theoremstyle{plain}
  \newtheorem{lem}[thm]{\protect\lemmaname}
  \theoremstyle{plain}
  \newtheorem{prop}[thm]{\protect\propositionname}

    
\addtolength{\oddsidemargin}{-.25in}
\addtolength{\evensidemargin}{-.25in}
\addtolength{\textwidth}{.5in}

\addtolength{\topmargin}{-.25in}

\makeatother

\usepackage{babel}
  \providecommand{\corollaryname}{Corollary}
  \providecommand{\lemmaname}{Lemma}
  \providecommand{\propositionname}{Proposition}
\providecommand{\theoremname}{Theorem}

\begin{document}

\title[Counting one sided simple closed geodesics]{Counting one sided simple closed geodesics on fuchsian thrice punctured
projective planes}
\begin{abstract}
We prove that there is a true asymptotic formula for the number of
one sided simple closed curves of length $\leq L$ on any Fuchsian
real projective plane with three points removed. The exponent of growth
is independent of the hyperbolic structure, and it is noninteger, in contrast to counting results of Mirzakhani
for simple closed curves on orientable Fuchsian surfaces.
\end{abstract}

\author{michael magee}

\thanks{M. Magee was supported in part by NSF award DMS-1701357.}

\maketitle

\global\long\def\Z{\mathbb{\mathbf{Z}}}
\global\long\def\Spec{\mathrm{Spec}}
\global\long\def\G{\mathbb{G}}
\global\long\def\X{\mathbb{X}}
\global\long\def\GL{\mathrm{GL}}
\global\long\def\O{\mathcal{O}}
\global\long\def\SL{\mathrm{SL}}
\global\long\def\C{\mathbf{C}}
\global\long\def\tr{\mathrm{tr}}
\global\long\def\Aut{\mathrm{Aut}}
\global\long\def\Out{\mathrm{\mathrm{Out}}}
\global\long\def\F{\mathbb{F}}
\global\long\def\Q{\mathbf{Q}}
\global\long\def\R{\mathbf{R}}
\global\long\def\Perms{\mathrm{Perms}}
\global\long\def\Fib{\mathrm{Fib}}
\global\long\def\tpt{\{3\text{ points}\}}
\global\long\def\T{\mathcal{T}}
\global\long\def\ord{\mathrm{ord}}
\global\long\def\Stab{\mathrm{Stab}}
\global\long\def\G{\mathcal{G}}

\section{Introduction}

Let $\Sigma:=P^{2}(\R)-\tpt$, the three times punctured real projective
plane. It is the fixed topological surface of interest in this paper.
Any hyperbolic structure $J$ of finite area on $\Sigma$ gives a
metric of curvature $-1$ and hence a way to measure the length of
curves. For fixed $J$, any isotopy class of nonperipheral simple
closed curve $[\gamma]$ on $\Sigma$ has a unique geodesic representative,
and we call the length of this geodesic with respect to $J$ simply
the length of $[\gamma]$.

It is known by work of Mirzakhani \cite{MIRZSIMPLE} that for a fixed
finite area hyperbolic structure $J$ on an \emph{orientable} surface
$S$, the number $n_{J}(L)$ of isotopy classes of simple closed curves
of length $\leq L$ has an asymptotic formula:
\begin{thm}[Mirzakhani]\label{mirztheorem}

\[
n_{J}(L)=cL^{d}+o(L^{d})
\]
where $c=c(J)>0$ and $d=d(S)>0$ is the \textbf{\emph{integer}}\emph{
}dimension of the space of compactly supported measured laminations
on $S$.
\end{thm}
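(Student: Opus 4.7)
The plan centers on the space $\mathcal{ML}(S)$ of compactly supported measured laminations on $S$, which carries a natural piecewise-linear integral structure, a scaling action of $\R_{>0}$, and a Thurston measure $\mu_{\mathrm{Th}}$ that is invariant under the mapping class group $\mathrm{MCG}(S)$ and homogeneous of degree $d$. Here $d = 6g-6+2n$ arises as the dimension of the PL cell structure, explaining why $d$ is an integer. Simple closed curves form a discrete subset of $\mathcal{ML}(S)$ and split into only finitely many $\mathrm{MCG}(S)$-orbits, indexed by topological type. The length function $\ell_{J}\colon \mathcal{ML}(S)\to \R_{\geq 0}$ is continuous and homogeneous, so $B_{J}(L):=\{\lambda: \ell_{J}(\lambda)\leq L\}$ satisfies $B_{J}(L)=L\cdot B_{J}(1)$ and is compact.

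First I would decompose
\[
n_{J}(L)=\sum_{[\gamma_{0}]}\#\bigl(\mathrm{MCG}(S)\cdot\gamma_{0}\cap B_{J}(L)\bigr),
\]
with the sum taken over the finite set of topological types of simple closed curves. This reduces the problem to the asymptotics of a single orbit count. By homogeneity, counting a fixed orbit in $B_{J}(L)$ amounts to counting it in the dilate $L\cdot B_{J}(1)$, and one expects an asymptotic of the form $c(J,\gamma_{0})L^{d}$ with $c(J,\gamma_{0})$ built from $\mu_{\mathrm{Th}}(B_{J}(1))$ and a normalizing factor depending only on the topological type of $\gamma_{0}$.

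Second, I would establish this orbit equidistribution via the Mirzakhani averaging strategy: integrate the normalized count $L^{-d}\#(\mathrm{MCG}(S)\cdot\gamma_{0}\cap B_{J}(L))$ over the moduli space $\mathcal{M}(S)$ against the Weil--Petersson measure, and apply Mirzakhani's integration formula to re-express this integral as a Weil--Petersson volume of the moduli space of the bordered surface obtained by cutting $S$ along $\gamma_{0}$. Letting $L\to\infty$ produces a well-defined limit that identifies the averaged leading coefficient. To pass from this averaged statement to pointwise convergence at a given hyperbolic structure $J$, I would use $\mathrm{MCG}(S)$-invariance of the limiting quantity, continuity of $\ell_{J}$ in $J$, and a non-escape-of-mass argument for the relevant measures on moduli space (a consequence of ergodic properties of the Thurston measure and the Weil--Petersson volume).

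The hard part is precisely this equidistribution step: it is the technical heart of Mirzakhani's proof and demands her integration formula together with her recursions for Weil--Petersson volumes to compute the averaged constant explicitly, and a nontrivial ergodic argument to upgrade the average to a pointwise statement for every $J$. Once this is in hand, positivity of $c(J)$ follows from $\mu_{\mathrm{Th}}(B_{J}(1))>0$, and summing the orbit asymptotics over the finitely many topological types produces $n_{J}(L)=c(J)L^{d}+o(L^{d})$. The integrality of $d$ in this orientable setting is what fails in the nonorientable case treated in the present paper: one-sided curves in $P^{2}(\R)-\tpt$ are not captured by a homogeneous $\mathrm{MCG}$-invariant measure on a lamination space of integer dimension, which is exactly why a noninteger exponent appears there.
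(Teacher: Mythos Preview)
This theorem is stated in the paper purely as a background result attributed to Mirzakhani, with a citation to \cite{MIRZSIMPLE}; the paper does not supply any proof of its own. Your outline is a faithful high-level sketch of Mirzakhani's original argument (Thurston measure on $\mathcal{ML}(S)$, decomposition into finitely many mapping class group orbits of simple closed curves, the integration formula over moduli space, and an ergodicity/equidistribution step to pass from an averaged to a pointwise asymptotic), so there is nothing in the present paper to compare it against beyond the citation itself.
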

In the case of the once punctured torus, a stronger form of Theorem \ref{mirztheorem} was obtained previously by McShane and Rivin  \cite{MR}.

An isotopy class of simple closed curve in $\Sigma$ is said to be
\emph{one sided }if cutting along this curve creates only one boundary
component, or in other words, a thickening of this curve is homeomorphic
to a M\"{o}bius band. The point of the current paper is to establish an
asymptotic formula for $n_{J}^{(1)}(L)$, the number of isotopy\emph{
}classes of \emph{one sided }simple closed curves of length $\leq L$
with respect to a given hyperbolic structure $J$ on $\Sigma$.
\begin{thm}
\label{thm:main}There is a \textbf{noninteger }parameter $\beta>0$
such that for any finite area hyperbolic structure $J$ on $P^{2}(\R)-\tpt$,
\[
n_{J}^{(1)}(L)=cL^{\beta}+o(L^{\beta})
\]

for some $c=c(J)>0$.
\end{thm}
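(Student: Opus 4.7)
The plan is to recast the counting of one-sided simple closed geodesics as an orbit-counting problem for a group action on a character variety, and then apply techniques from the ergodic theory of thin group actions.

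First, since $\chi(\Sigma) = -2$, the fundamental group $\pi_{1}(\Sigma)$ is free of rank three; fix an identification with $F_{3}$. Isotopy classes of one-sided simple closed curves correspond to conjugacy classes of primitive elements of $F_{3}$ whose image under the orientation character $\pi_{1}(\Sigma) \to \Z/2\Z$ is nontrivial. A classical change-of-coordinates argument should show that these form a single orbit under the mapping class group of $\Sigma$; if they break into finitely many orbits, this only alters the constant $c$.

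Second, encode the length function. The structure $J$ determines a faithful discrete representation $\rho_{J} \colon \pi_{1}(\Sigma) \to \mathrm{PGL}_{2}(\R)$, and for a one-sided curve $\gamma$ one has $\ell_{J}(\gamma) \sim 2\log|\tr \rho_{J}(\gamma)|$ as $\ell_{J}(\gamma) \to \infty$ (with suitable sign conventions for one-sided elements). Thus $n_{J}^{(1)}(L)$ is equivalent, via $T = e^{L/2}$, to the count of elements of a fixed mapping-class-group orbit in the relative character variety of $F_{3}$ whose trace is bounded by $T$. This character variety is an affine variety — of Markov cubic type — on which the mapping class group acts through $\Out(F_{3})$ by polynomial automorphisms preserving a boundary-trace level set.

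Third, count the orbit. The induced action on the trace variety factors through an explicit group $\Gamma$ generated by a small number of polynomial involutions (Vieta-style moves). One then counts the orbit $\Gamma \cdot x_{0}$ where $x_{0}$ corresponds to $J$. Via a symbolic coding, this reduces to summing over words in a semigroup, which can be analyzed by transfer operator and thermodynamic formalism methods. The expected main term is $c T^{\delta} + o(T^{\delta})$, with $\delta$ a critical exponent of the $\Gamma$-action on the trace variety that depends only on the dynamical system and not on the base point $x_{0}$; setting $\beta = 2\delta$ yields the claim. Independence of $\beta$ from $J$ is then automatic, since $\delta$ is intrinsic to the action of $\Gamma$ on the ambient variety.

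The main obstacle is step three. Producing the asymptotic with the correct leading constant requires: (i) identifying the correct symbolic coding of the $\Gamma$-orbit as a uniformly expanding subshift of finite type, (ii) establishing a spectral gap for the associated transfer operator in a suitable Banach space of holomorphic observables, and (iii) verifying that $\delta$ is genuinely noninteger, reflecting that the $\Gamma$-orbit is a fractal set in the trace variety. The contrast with Mirzakhani's theorem is structural: her integer exponents arise from integration of piecewise polynomial objects over moduli space, whereas here the count is genuinely hyperbolic and its exponent is a Hausdorff-type dimension attached to a thin action.
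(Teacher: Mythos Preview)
Your outline points in the right direction---reduce to orbit counting for a polynomial-automorphism group on a Markoff-type variety and extract the exponent from the dynamics---but as written it is a plan, not a proof, and several pieces are either misidentified or left open.

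\textbf{Wrong variety.} The relative $\mathrm{PGL}_2$ character variety of $F_3$ is not ``of Markov cubic type''; it is higher-dimensional, and the trace coordinates you invoke do not by themselves single out one-sided curves. The paper does not work on the full character variety at all. Instead, following Huang--Norbury, it uses that a maximal collection of pairwise-once-intersecting one-sided curves has exactly four members, and the Penner-type coordinates $x_i=\sqrt{2\sinh(\ell_{\alpha_i}/2)}$ land Teichm\"uller space onto $V(\R_+)$ for the \emph{quartic} $x_1^2+x_2^2+x_3^2+x_4^2=x_1x_2x_3x_4$. The relevant group is $\G=C_2^{*4}$ acting by the four Vieta/Markoff involutions; the combinatorics sits on a $4$-regular tree of ``flips'' of ordered $3$-simplices in the one-sided curve complex, and one must prove this graph really is a tree. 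Your $F_3$/trace scaling $T=e^{L/2}$ is also off from what actually occurs: the $x_i$ grow like $e^{L/4}$.

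\textbf{Unresolved core.} You explicitly flag step three as the main obstacle and list (i) symbolic coding, (ii) spectral gap, and (iii) nonintegrality of $\delta$ as things still to be done. None of these is addressed. In the paper, (i)--(ii) are not redone from scratch: once the count is reduced to $\sum_{x\in\O_i}\mathbf{1}\{\max x\le e^L\}$ over finitely many outgoing orbits $\O_i\subset V(\R_+)$, the asymptotic $c(\O_i)L^{\beta}+o(L^{\beta})$ is imported from Gamburd--Magee--Ronan, after checking their hypotheses survive for real (not just integer) base points---the key analytic input being a uniform lower bound $x_ix_j\ge 2+\epsilon$ on the orbit. Your proposal gives no substitute for this body of work.

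\textbf{Nonintegrality.} You assert $\delta$ should be ``genuinely noninteger'' because the orbit is fractal, but give no argument. In the paper this is not proved abstractly: the exponent is identified with Baragar's $\beta(4)$, and the noninteger claim rests on his numerical enclosure $2.430<\beta(4)<2.477$. Absent such input, your sketch does not establish the headline feature of the theorem.
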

The parameter $\beta$ appeared for the first time in the work of
Baragar \cite{BARAGAR2,BARAGAR1,BARAGAR3} in connection with the
affine varieties
\[
V_{n,a}:\quad x_{1}^{2}+x_{2}^{2}+\ldots+x_{n}^{2}=ax_{1}x_{2}\ldots x_{n}.
\]
These varieties have a rich automorphism group that contains an embedded
copy of $\G:=C_{2}^{*n}$, the free product of a cyclic group of size
$2$ with itself $n$ times. Baragar proved that for $o\in V(\Z)$
the following limit exists and is independent of $o$:
\[
\lim_{R\to\infty}\frac{\log|\G.o\cap B_{\ell^{\infty}}(R)|}{\log\log R}=\beta(n)>0.
\]
The variety $V_{4,1}$ was connected to the\emph{ }Teichm\"{u}ller space
of $\Sigma$ by Huang and Norbury in \cite{HN}. The value $\beta$
of Theorem \ref{thm:main} is therefore $\beta:=\beta(4)$ that Baragar
estimated to be in the range
\[
2.430<\beta(4)<2.477.
\]

Using Baragar's result, Huang and Norbury proved in \cite{HN} for
an arbitrary hyperbolic structure $J$ on $\Sigma$ that\footnote{This statement corrects the statement in \cite[Theorem 3]{HN}.}
\[
\lim_{L\to\infty}\frac{\log n_{J}^{(1)}(L)}{\log L}=\beta.
\]

A true asymptotic count for the integer points $V_{n,a}(\Z)$ was
obtained\footnote{In fact the paper \cite{GMR} treats slightly more general varieties
than $V_{n,a}$.} by Gamburd, Magee and Ronan in \cite[Theorem 3]{GMR}.

\begin{thm}[Gamburd-Magee-Ronan]
\label{thm:GMR-thm}Let $o\in V_{n,a}(\Z)$ and $\beta=\beta(n)$
as for Baragar \cite{BARAGAR2}. There is $c(o)>0$ such that
\[
|\G.o\cap B_{\ell^{\infty}}(R)|=c(\log R)^{\beta}+o\left((\log R)^{\beta}\right).
\]
\end{thm}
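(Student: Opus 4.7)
The plan is to code the orbit $\G.o$ by reduced words in $\G = C_2^{\ast n}$ and reduce the orbit-counting problem to a renewal-theoretic count for a H\"older cocycle on a subshift of finite type. Write $\sigma_1, \ldots, \sigma_n$ for the Vieta involutions; each $\sigma_i$ replaces $x_i$ by $a\prod_{j \ne i} x_j - x_i$, and a reduced word is a product $w = \sigma_{i_k} \cdots \sigma_{i_1}$ with $i_{j+1} \ne i_j$. First I would show that, after excising a finite exceptional set (orbits with a nontrivial finite stabilizer in $\G$, or orbits starting in the ``small-coordinate'' regime where the first few moves do not ascend), the assignment $w \mapsto w.o$ from reduced words to $\G.o$ is a bijection onto its image. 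Thus $|\G.o \cap B_{\ell^{\infty}}(R)|$ agrees, up to $O(1)$, with the number of reduced words $w$ satisfying $\|w.o\|_{\infty} \leq R$.

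The key observation is that the Vieta dynamics linearize on the logarithmic scale: passing to $y_i := \log|x_i|$, the move $\sigma_i$ acts approximately as the affine map $J_i \colon Y \mapsto (y_1, \ldots, y_{i-1}, \log a + \sum_{j \ne i} y_j, y_{i+1}, \ldots, y_n)$, provided $(x_1, \ldots, x_n)$ lies in the ``ascending cone'' where $|x_i|$ is dominated by $\prod_{j \ne i}|x_j|$. Next I would prove an escape lemma: there is $k_0 = k_0(o)$ such that any reduced word of length at least $k_0$ sends $o$ into the ascending cone and keeps it there, and the discrepancy between $Y(w.o)$ and its linearized avatar $A_k Y_0 + b_k$, with $A_k := J_{i_k}\cdots J_{i_1}$, decays geometrically in $k$. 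Positivity of the $J_i$ (all entries are $0$ or $1$) together with a Perron--Birkhoff contraction estimate in Hilbert's projective metric should supply this error bound uniformly along reduced words.

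With the linearization in hand, I would introduce the one-sided subshift $\Sigma^{+}_n \subset \{1, \ldots, n\}^{\mathbb{N}}$ of reduced sequences and the H\"older cocycle $\psi(\underline{i}) := \log \|J_{i_1} v(\sigma \underline{i})\|_{\infty}$, where $v(\underline{i})$ is the unit vector in the forward-invariant Perron direction determined by $\underline{i}$. Its Birkhoff sum satisfies $S_k\psi(\underline{i}) = \log\|A_k Y_0\|_{\infty} + O(1) = \log\log\|w.o\|_{\infty} + O(1)$. The exponent $\beta$ is then characterized by Bowen's pressure equation $P(-\beta \psi) = 0$ on $\Sigma^{+}_n$, and I would identify it with Baragar's $\beta(n)$ by matching the two logarithmic-scale growth rates. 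Once mixing and a suitable non-arithmeticity condition on $\psi$ are verified, Lalley's renewal theorem for H\"older potentials on subshifts of finite type yields
\[
\#\{w \text{ reduced}: S_{|w|}\psi(w) \leq T\} = c\, e^{\beta T} + o\bigl(e^{\beta T}\bigr),
\]
and setting $T = \log\log R$ converts this into the desired $c(\log R)^{\beta} + o((\log R)^{\beta})$.

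The hardest step is the effective linearization together with the verification of the non-lattice condition on $\psi$. The first is delicate because small relative errors in $Y$ could in principle be amplified by the non-commuting matrices $J_i$, so one must show that along \emph{reduced} words the Perron direction stays comfortably inside the image cone and errors contract geometrically; the constraint $i_{j+1} \ne i_j$ is what ensures sufficient positivity of finite products $J_{i_k}\cdots J_{i_1}$. The second is the technical heart of upgrading a rough $\Theta$-rate into the sharp $o$-asymptotic of Theorem \ref{thm:GMR-thm}; it should follow from a non-resonance statement about the algebraic eigenvalues of the matrix products $J_{i_k}\cdots J_{i_1}$, but this arithmetic input is where a genuinely new idea is most likely to be required.
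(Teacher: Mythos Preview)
This theorem is not proved in the paper; it is quoted from \cite{GMR} as background, and only a brief outline of the method appears later in Section~5 when the adapted version (Theorem~\ref{thm:GMR-addapted}) is invoked. Your sketch is, as far as one can judge from that outline, the same architecture as \cite{GMR}: pass to logarithmic coordinates where the Vieta involutions become approximately the piecewise-linear maps $J_i$, encode reduced words as a subshift of finite type, and extract the asymptotic via a Lalley-type renewal theorem for the H\"older cocycle recording the growth of $\log\|x\|_\infty$.

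Two organisational differences are worth flagging. First, the paper (following \cite{GMR}) sets up the reduction by first excising a compact set $K$ on which Properties \textbf{A}, \textbf{B}, \textbf{C} may fail and then decomposing $\G.o-K=\bigcup_{i=1}^N\O_i$ into finitely many orbits under \emph{outgoing} moves; each $\O_i$ is parametrised by a rooted ternary subtree of $\G$ and counted separately. Your ``escape lemma'' packages the same content but skips this explicit decomposition. Second, the paper singles out the uniform bound $x_ix_j\geq 2+\epsilon$ (Lemma~\ref{lem:alpha_bound_below}, corresponding to \cite[Lemma~20]{GMR}) as the key quantitative input that makes the linearization effective; in your language this is exactly what forces the orbit to stay in the ascending cone and gives the geometric contraction of the error. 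You have correctly located the two genuinely hard points---the effective linearization and the non-lattice condition for $\psi$---so as a plan this is on target, but note that the $J_i$ are affine rather than linear and their linear parts are only \emph{eventually} positive along reduced words, so the Hilbert-metric contraction argument needs products of length depending on $n$ before it bites.
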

This is a strengthening of Baragar's result analogous to the main
Theorem \ref{thm:main}. It is worth noting that the type of
arguments used by Huang and Norbury in \cite{HN} would not be enough
to establish Theorem \ref{thm:main}, even using Theorem \ref{thm:GMR-thm}
as input. In the sequel we show how to combine and refine the arguments
of \cite{GMR} and \cite{HN} to prove Theorem \ref{thm:main}.

We also point out the recent preprint of Gendulphe \cite{GENDULPHE} who has begun a systematic investigation into the issues of growth rates of simple geodesics on general non-orientable surfaces.

\section{Orbits on Teichm\"{u}ller space}

The \emph{curve complex} of $\Sigma$ is the simplicial complex whose
vertices are isotopy classes of one sided simple closed curves, and
a collection of $k+1$ curves span a $k$-simplex if they pairwise
intersect once. We write $Z$ for this complex that was introduced
by Huang and Norbury in \cite{HN}, and its 1-skeleton was studied
earlier by Scharlemann in \cite{SCH}. It is a pure complex of dimension
$3$, that is, all maximal simplices are 3 dimensional. Throughout
the paper we use the notation $Z^{k}$ for the $k$-simplices of $Z$.

 The collection of all finite area hyperbolic structures on $\Sigma$
is called the \emph{Teichm\"{u}ller space of $\Sigma$ }and denoted by
$\T(\Sigma)$. It has a natural real analytic structure.

Let $V$ be the affine subvariety of $\C^{4}$ cut out by the equation
\begin{equation}
x_{1}^{2}+x_{2}^{2}+x_{3}^{2}+x_{4}^{2}=x_{1}x_{2}x_{3}x_{4}.\label{eq:variety-def}
\end{equation}
It was proven by Hu, Tan and Zhang in \cite[Theorem 1.1]{HPZ} that
the automorphism group of the complex variety $V$ is given by 
\[
\Lambda\rtimes(N\rtimes S_{4})
\]
where
\begin{enumerate}
\item $N$ is the group of transformations that change the sign of an even
number of variables.
\item $S_{4}$ is the symmetric group on $4$ letters that acts by permuting
the coordinates of $\C^{4}$.
\item $\Lambda$ is a nonlinear group generated by \emph{Markoff moves},
e.g.
\[
m_{1}(x_{1},x_{2},x_{3},x_{4})=(x_{2}x_{3}x_{4}-x_{1},x_{2},x_{3},x_{4})
\]
replaces $x_{1}$ by the other root of the quadratic obtained by fixing
$x_{2},x_{3},x_{4}$ in (\ref{eq:variety-def}). Similarly there are
moves $m_{2},m_{3},m_{4}$ that flip the roots in the other coordinates,
and $m_{1},m_{2},m_{3},m_{4}$ generate a subgroup 
\begin{equation}
\Lambda\cong C_{2}*C_{2}*C_{2}*C_{2}\label{eq:lambda_id}
\end{equation}
of $\Aut(V)$ where the $m_{i}$ correspond to the generators of the
$C_{2}$ factors.
\end{enumerate}
Since the abstract group $C_{2}^{*4}$ acts in different ways in the
sequel, we let
\[
\G:=C_{2}^{*4}.
\]
We obtain an action of $\G$ on $V(\R_{+})$ by the identification
(\ref{eq:lambda_id}).

Huang and Norbury in \cite{HN} prove that $V(\R_{+})$ can be identified
with $\T(\Sigma)$ by the following map. Let $\Delta=(\alpha_{1},\alpha_{2},\alpha_{3},\alpha_{4})$
be an ordering of a 3-simplex of $Z$. Let $\ell_{\alpha_{j}}(J)$
be the length of the geodesic representative of $\alpha_{j}$ in the
metric of $J$. Define a map 
\[
\Theta_{\Delta}(J):=\left(x_{\alpha_{1}}(J),x_{\alpha_{2}}(J),x_{\alpha_{3}}(J),x_{\alpha_{4}}(J)\right)
\]
where 
\begin{equation}
x_{\alpha_{i}}(J):=\sqrt{2\sinh\left(\frac{1}{2}\ell_{\alpha_{i}}(J)\right)}.\label{eq:x_equation}
\end{equation}
Building on work of Penner \cite{Penner}, Huang and Norbury show
\begin{thm}[{\cite[Proposition 8 and Section 2.4]{HN}}]
\label{thm:Teichmuller-identification}For any ordering of the curves
$\Delta=(\alpha_{1},\alpha_{2},\alpha_{3},\alpha_{4})$ in a 3-simplex
of $Z$, 
\begin{align*}
\Theta_{\Delta}:\T(\Sigma) & \to V(\R_{+})
\end{align*}

is a real analytic diffeomorphism.
\end{thm}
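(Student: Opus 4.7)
My plan is to verify three things: (i) $\Theta_{\Delta}$ maps $\T(\Sigma)$ into the positive real locus $V(\R_{+})$; (ii) the map admits a real-analytic right inverse built from Penner's $\lambda$-length coordinates; (iii) both sides are connected real-analytic $3$-manifolds, so (i) and (ii) together force $\Theta_{\Delta}$ to be a global real-analytic diffeomorphism. For (i), I would work with the holonomy representation $\rho_{J}:\pi_{1}(\Sigma)\to\mathrm{PGL}_{2}(\R)$. Each one-sided simple closed curve $\alpha_{i}$ has holonomy that is an orientation-reversing glide reflection of translation length $\ell_{\alpha_{i}}(J)$; lifted to a matrix $A_{i}\in\GL_{2}(\R)$ with $\det A_{i}=-1$ the trace satisfies $\tr(A_{i})^{2}=4\sinh^{2}(\ell_{\alpha_{i}}(J)/2)=2\,x_{\alpha_{i}}(J)^{2}$, by (\ref{eq:x_equation}). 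The four curves of a 3-simplex of $Z$ pairwise intersect once, and one verifies that for a correct choice of lifts a suitable ordered product $A_{1}A_{2}A_{3}A_{4}$ represents a peripheral loop of $\Sigma$, which is parabolic with trace $\pm 2$. Iterated application of the standard Fricke identity $\tr(AB)+\tr(AB^{-1})=\tr(A)\tr(B)$ to this configuration collapses, after squaring, to the Markoff relation (\ref{eq:variety-def}); positivity of the $x_{\alpha_{i}}(J)$ is built into the definition.

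For (ii), I would appeal to Penner's theorem on decorated Teichm\"uller space \cite{Penner}. An ordered 3-simplex of $Z$ together with the three cusps of $\Sigma$ determines an ideal cell decomposition; Penner's theorem then provides a real-analytic diffeomorphism from the decorated Teichm\"uller space onto $\R_{+}^{E}$ via $\lambda$-lengths on the edges. Each coordinate $x_{\alpha_{i}}(J)$ of $\Theta_{\Delta}(J)$ is an explicit algebraic function of these $\lambda$-lengths, and conversely --- after picking the natural horocycle normalization at each of the three cusps, which kills the three extra decoration parameters --- the $\lambda$-lengths can be recovered algebraically from $(x_{\alpha_{1}},\ldots,x_{\alpha_{4}})\in V(\R_{+})$. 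This manufactures a real-analytic map $V(\R_{+})\to\T(\Sigma)$ whose composition with $\Theta_{\Delta}$ is the identity.

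The main obstacle is step (ii): applying Penner's framework, which is set up for \emph{orientable} punctured surfaces, to the non-orientable $\Sigma$. A natural approach is to pass to the orientation double cover --- a six-punctured sphere, on which Penner's coordinates are available directly --- and descend to $\T(\Sigma)$ as the fixed locus of the deck involution acting on $\lambda$-length coordinates. One must match the combinatorial datum of a 3-simplex of $Z$ with a deck-invariant ideal decomposition of the cover, and verify that the natural horocycle normalization cuts the $\lambda$-length chart down to a hypersurface defined by precisely (\ref{eq:variety-def}) and no spurious extra relations. Once this is accomplished, step (iii) is automatic from connectedness of $\T(\Sigma)$ and of $V(\R_{+})$ together with the local-diffeomorphism output of (i) and (ii).
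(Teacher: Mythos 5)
The paper does not prove this theorem: the bracketed attribution in the theorem heading shows it is imported wholesale from Huang and Norbury \cite{HN} (their Proposition~8 together with their Section~2.4), with the paper only remarking that Huang--Norbury ``build on work of Penner.'' So there is no internal proof to compare against, and your proposal must be judged on its own merits; its reliance on Penner's $\lambda$-length framework and its flagging of the orientable/non-orientable mismatch are at least consistent with how Huang and Norbury proceed.

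That said, two concrete gaps need to be addressed before the proposal is sound. First, the trace arithmetic is wrong: from (\ref{eq:x_equation}) one has $x_{\alpha_{i}}^{2}=2\sinh(\ell_{\alpha_{i}}(J)/2)$, and a $\det=-1$ lift $A_{i}$ of a glide reflection of translation length $\ell_{\alpha_{i}}$ has eigenvalues $e^{\ell_{\alpha_i}/2}$ and $-e^{-\ell_{\alpha_i}/2}$, so $|\tr A_{i}|=2\sinh(\ell_{\alpha_{i}}/2)=x_{\alpha_{i}}^{2}$ and hence $\tr(A_{i})^{2}=x_{\alpha_{i}}^{4}$, not $2x_{\alpha_{i}}^{2}$ as you wrote. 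This is not cosmetic; the Markoff equation (\ref{eq:variety-def}) is quartic and the degree-matching in your step (i) depends on getting this right. Second, your step (ii) only produces a right inverse $\Psi$ with $\Theta_{\Delta}\circ\Psi=\mathrm{id}$ on $V(\R_{+})$, which gives surjectivity of $\Theta_{\Delta}$ but says nothing about injectivity. Your step (iii) then claims to finish via ``the local-diffeomorphism output of (i) and (ii)'' plus connectedness, but neither (i) nor (ii) as stated establishes that $\Theta_{\Delta}$ is a local diffeomorphism (for instance, you never compute a Jacobian). You need either to prove $\Psi\circ\Theta_{\Delta}=\mathrm{id}$ as well --- i.e.\ that the normalized $\lambda$-lengths recover $J$ up to isotopy --- or to supply a separate injectivity or properness argument. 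Without one of these, ``real-analytic bijection, hence diffeomorphism'' does not follow.
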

Let $Z_{\ord}^{3}$ denote tuples $(\alpha_{1},\alpha_{2},\alpha_{3},\alpha_{4})$
such that $\{\alpha_{1},\alpha_{2},\alpha_{3},\alpha_{4}\}$ is a
3-simplex of $Z$. It is more symmetric to consider instead of Theorem
\ref{thm:Teichmuller-identification}, the pairing
\begin{align*}
\langle\bullet,\bullet\rangle:T(\Sigma)\times Z_{\ord}^{3} & \to V(\R_{+}),\\
\langle J,\Delta\rangle & :=\Theta_{\Delta}(J).
\end{align*}
Huang and Norbury note for fixed $\Delta=(\alpha,\beta,\gamma,\delta)\in Z_{\ord}^{3}$
there is a unique way to `flip' each of $\alpha,\beta,\gamma,\delta$
to another one sided simple closed curve, say $\alpha'$ in the case
of $\alpha$ being flipped, so that e.g. $\Delta'=(\alpha',\beta,\gamma,\delta)$
is in $Z_{\ord}^{3}$, i.e. $\alpha'$ intersects each of $\beta,\gamma,\delta$
once. 

This yields an action of $\G$ on $Z_{\ord}^{3}$ where the generator
of the first $C_{2}$ factor always acts by flipping the first curve
and so on. Recall also the action of $\G$ on $V(\R_{+})$. The pairing
$\langle\bullet,\bullet\rangle$ is equivariant for the action of
$\G$ on the second factor:
\[
\langle J,g.(\alpha_{1},\alpha_{2},\alpha_{3},\alpha_{4})\rangle=g.\langle J,(\alpha_{1},\alpha_{2},\alpha_{3},\alpha_{4})\rangle,\quad g\in\G.
\]

\section{Dynamics of the markoff moves}

Our approach to counting relies on establishing the following properties
for points $x\in V(\R_{+})$ in various contexts.
\begin{description}
\item [{\label{enu:The-largest-entry-unique}A}] The largest entry of $x$
appears in exactly one coordinate. 
\item [{\label{enu:decrease-largest}B}] If $x_{j}$ is the largest coordinate
of $x$ then the largest entry of $m_{j}(x)$ is smaller than $x_{j}$,
that is, $(m_{j}(x))_{i}<x_{j}$ for all $i.$ 
\item [{\label{enu:increase-non-largest}C}] If $x_{j}$ is not the unique
largest coordinate of $x$ then it becomes the largest after the move
$m_{j}$, that is, $(m_{j}(x))_{j}>(m_{j}(x))_{i}$ for all $i\neq j.$ 
\end{description}
We will have use for the following theorem due to Hurwitz \cite{HURWITZ},
building on work of Markoff \cite{MARKOFF}.
\begin{thm}[Markoff, Hurwitz: Infinite descent]
\label{thm:Hurwitz}If $x\in V(\Z_{+})-(2,2,2,2)$ then Properties
\textbf{A}, \textbf{B} and \textbf{C} hold for $x$.
\end{thm}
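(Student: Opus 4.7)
The plan is to leverage the Vieta relation underlying each Markoff move: viewing (\ref{eq:variety-def}) as a quadratic in $x_j$ with the other coordinates fixed, the two roots are $x_j$ and $x_j'=(\prod_{k\neq j}x_k)-x_j=(\sum_{k\neq j}x_k^2)/x_j$, and $m_j$ interchanges them. After relabeling I may assume $x_1\le x_2\le x_3\le x_4$; then all three properties will be consequences of two ingredients: (i) $x_1\ge 2$, and (ii) the key descent inequality $x_4'\le x_3$.

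The first step is to rule out $x_1=1$. If $x_1=1$, reducing $1+x_2^2+x_3^2+x_4^2=x_2x_3x_4$ mod~$2$ forces exactly one of $x_2,x_3,x_4$ to be odd. Reducing mod~$4$, the two even coordinates contribute $0$ and the odd one $1$, so LHS $\equiv 2\pmod 4$, while RHS contains two even factors and is $\equiv 0\pmod 4$. Hence $x_1\ge 2$, and in particular $x_1x_2\ge 4$.

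For Property~A, suppose $x_3=x_4$. Substituting gives $x_3^2(x_1x_2-2)=x_1^2+x_2^2$; from $x_1x_2\ge 4$ and $x_3\ge x_2\ge x_1$, the left side is at least $2x_3^2\ge x_1^2+x_2^2$, with equality forcing $x_1=x_2=x_3=2$ and so $x=(2,2,2,2)$. Hence the largest coordinate is strictly $x_4$. For Property~B it suffices to show $x_4'<x_4$, and I claim the stronger bound $x_4'\le x_3$. Using $x_4'=(x_1^2+x_2^2+x_3^2)/x_4$ together with the defining equation, $x_4'\le x_3$ is equivalent to $x_4\ge x_3(x_1x_2-1)$. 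Setting $T:=x_3(x_1x_2-1)$ and substituting into the quadratic $f(t)=t^2-x_1x_2x_3\,t+(x_1^2+x_2^2+x_3^2)$ whose roots are $x_4'$ and $x_4$, a direct computation yields
\[
f(T)=x_1^2+x_2^2-x_3^2(x_1x_2-2)\le 0,
\]
using $x_1,x_2\ge 2$ and $x_3\ge x_2$. Hence $T$ lies between the two roots, so $T\le x_4$, i.e.\ $x_4'\le x_3<x_4$, giving~B.

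For Property~C, fix $j\in\{1,2,3\}$; these are the only $j$ for which $x_j$ fails to be the unique maximum. Then $x_j'=(\prod_{k\neq j}x_k)-x_j$, where the product contains $x_4$ and two factors from $\{x_1,x_2,x_3\}\setminus\{x_j\}$, each at least $2$, and $x_j\le x_3<x_4$. Hence $x_j'\ge 4x_4-x_j>3x_4$, which strictly exceeds every other coordinate. The main obstacle, and the heart of the classical Markoff--Hurwitz descent, is the inequality $x_4'\le x_3$; note it is sharp at $x=(2,2,2,6)$, where $x_4'=x_3=2$ but still $x_4'<x_4=6$, so Property~B remains valid. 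The elimination $x_1\neq 1$ is indispensable -- without it the descent inequality can fail -- but the mod-$4$ parity argument dispatches it in one line.
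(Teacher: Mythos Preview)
Your argument is self-contained and much more explicit than the paper's, which simply cites Hurwitz's classical result for the variety $V':x_1^2+\cdots+x_4^2=4x_1x_2x_3x_4$ at the base point $(1,1,1,1)$ and transports it to $V(\Z_+)$ via the bijection $x\mapsto 2x$. What you have written is essentially the content of that classical descent, carried out directly on $V$; the mod-$4$ elimination of $x_1=1$ and the Vieta analysis are the expected ingredients, and your treatments of Properties~\textbf{A} and~\textbf{C} are clean.

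There is, however, one genuine logical slip in Property~\textbf{B}. From $f(T)\le 0$ you write ``$T$ lies between the two roots, so $T\le x_4$''. But $f(T)\le 0$ only yields $T\le\max(x_4,x_4')$; concluding $T\le x_4$ presupposes $x_4'\le x_4$, which is precisely what you are trying to establish. The repair is immediate once you notice that your computed value is also $f(x_3)$: indeed $x_3+T=x_1x_2x_3$ equals the sum of the roots of $f$, so $f(x_3)=f(T)=x_1^2+x_2^2-x_3^2(x_1x_2-2)\le 0$. Thus $x_3$ lies between the two roots, and since $x_3<x_4$ by Property~\textbf{A}, the smaller root must be $x_4'$, giving $x_4'\le x_3<x_4$. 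This is the form in which the descent inequality is normally argued (and is how the paper handles the analogous step in the proof of Theorem~\ref{prop:There-is-a-set-K}, via the sign of $f(x_3)$). With this one-line fix your proof is complete.
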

\begin{proof}
Hurwitz showed the corresponding result for the point $(1,1,1,1)\in V'(\Z_{+})$
where $V'$ is defined by
\[
V':\quad x_{1}^{2}+x_{2}^{2}+x_{3}^{2}+x_{4}^{2}=4x_{1}x_{2}x_{3}x_{4}.
\]
It is easy to check that the map $V'(\Z_{+})\to V(\Z_{+})$, $x\mapsto2x$
is a bijection. 
\end{proof}
\begin{cor}
\label{cor:uniqueness}Every $x$ in $V(\Z_{+})$ has every entry
$x_{j}\geq2$ and is obtained by a unique series of nonrepeating $m_{j}$
from $(2,2,2,2)$.
\end{cor}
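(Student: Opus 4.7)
The plan is to run an infinite-descent argument driven by Theorem \ref{thm:Hurwitz}. The lower bound $x_j \geq 2$ is immediate from the proof of Theorem \ref{thm:Hurwitz}: the bijection $V'(\Z_+) \to V(\Z_+)$, $x \mapsto 2x$, forces every coordinate of every integer point of $V$ to be an even positive integer.

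To produce a descent path from a given $x \in V(\Z_+)$, I would iterate the following step: if $x = (2,2,2,2)$ stop; otherwise Property \textbf{A} picks out a unique index $j$ at which the maximum of $x$ is attained, and I would replace $x$ by $m_j(x)$. Property \textbf{B} guarantees that the maximum strictly decreases; since coordinates are positive integers the process terminates, and by Theorem \ref{thm:Hurwitz} the only admissible stopping point is $(2,2,2,2)$. Reversing this path and using that each $m_j$ is an involution expresses the original $x$ as a word in the $m_j$'s applied to $(2,2,2,2)$. The word is non-repeating: writing $y = m_j(x)$, Property \textbf{B} gives $y_k < x_j$ for every $k$, and if the unique max of $y$ were again at position $j$ then Property \textbf{B} applied to $y$ would give $x_k = (m_j(y))_k < y_j$ for every $k$, contradicting the inequality $y_j < x_j$ already established.

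For uniqueness, the key is Property \textbf{C}. Given any reduced word $w = m_{i_1} \cdots m_{i_\ell}$ with $w.(2,2,2,2) = x$, I would build the ascending sequence $v_0 = (2,2,2,2)$, $v_s = m_{i_{\ell-s+1}}(v_{s-1})$, so that $v_\ell = x$ and consecutive indices in the ascent differ. I would then prove by induction on $s$ that the unique maximum of $v_s$ occurs at the index $i_{\ell-s+1}$ of the most recently applied move: the base case $v_1 = (2,\ldots,6,\ldots,2)$ is direct, and for the inductive step the reduced-word condition ensures that the position being updated is not already the unique max of $v_{s-1}$, so Property \textbf{C} places the new unique max at that position. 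Applied with $s = \ell$, this forces $i_1$ to be the position of the unique max of $x$, which by Property \textbf{A} must coincide with the first letter $j_1$ of the canonical descent. Induction on word length then finishes the argument, since $m_{j_1}(x)$ inherits the reduced representation $m_{i_2} \cdots m_{i_\ell}$.

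The main obstacle is the uniqueness step. Properties \textbf{A} and \textbf{B} alone suffice to show that the canonical descent from $x$ terminates at $(2,2,2,2)$, but they cannot rule out two distinct reduced ascents from $(2,2,2,2)$ producing the same $x$. Property \textbf{C} is exactly what is needed to match any candidate reduced ascent against the canonical descent, and is the essential ingredient for this half of the corollary.
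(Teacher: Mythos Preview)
Your argument is correct and is precisely the intended deduction: the paper states this result as an immediate corollary of Theorem~\ref{thm:Hurwitz} with no separate proof, and your use of Properties~\textbf{A}, \textbf{B}, \textbf{C} to run infinite descent for existence and Property~\textbf{C} along any reduced ascent for uniqueness is exactly how one unpacks that corollary. The only remark is that the paper leaves all of this implicit, so your write-up is more detailed than what appears there rather than different from it.
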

The following observation will be used several times in the remainder
of the section.
\begin{lem}
\label{lem:discreteness}For any $o\in V(\R_{+})$, the coordinates
of $\G.o$ form a discrete set.
\end{lem}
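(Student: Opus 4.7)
The plan is to exploit the tree structure of $\G=C_{2}^{*4}$ acting on the orbit. Organize $\G.o$ as a tree rooted at $o$, so that each non-root orbit point arises from its parent by flipping exactly one of the four coordinates via some $m_{j}$; hence three coordinates are inherited from the parent and exactly one new coordinate value appears at each non-root node.

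First, I would carve out a bounded subset $K\subset V(\R_{+})$ outside which Properties \textbf{A}, \textbf{B}, \textbf{C} of Section~3 all hold. If $x_{j}$ is the largest coordinate of $x$, then Property \textbf{B} is equivalent to $2x_{j}>\prod_{i\ne j}x_{i}$, which on the variety (\ref{eq:variety-def}) becomes $x_{j}^{2}>\sum_{i\ne j}x_{i}^{2}$. Inspection shows this inequality, together with its analogues and Property \textbf{A}, cuts out a bounded locus in $V(\R_{+})$ whose closure we take as $K$.

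Next, I would argue that along any non-backtracking reduced word in $\G$, either the walk eventually enters $K$, or the maximum coordinate of the orbit points grows without bound. Outside $K$, a non-backtracking step either flips the current maximum coordinate (strictly decreasing it by \textbf{B}) or flips a non-maximum (making the freshly flipped coordinate the new strict maximum by \textbf{C}, hence exceeding the previous maximum). A sequence of consecutive ``flip-the-max'' steps drives the orbit point into $K$ since the maximum coordinate is bounded below by a positive constant on $V(\R_{+})$; after such a descent the walk must take an ``increase'' step at which the maximum strictly grows, and this propagates through subsequent non-max flips. Combined with compactness of $K$ and finiteness of $\G.o\cap K$ (a discreteness statement about the Markoff orbit established in Baragar's work), this shows that for every $R>0$ only finitely many orbit points have maximum coordinate at most $R$.

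Finally, any coordinate value of $\G.o$ at most $R$ must fall into one of three classes: (i)~one of the four coordinates of $o$ itself; (ii)~the newly flipped coordinate at a non-root node outside $K$, which by \textbf{C} coincides with that node's maximum and so forces the node to have maximum at most $R$; or (iii)~a coordinate of one of the finitely many orbit points inside the compact region $K$. All three sources contribute only finitely many values bounded by $R$, so the coordinate set of $\G.o$ is discrete. The main technical challenge is securing finiteness of $\G.o\cap K$, i.e.\ discreteness of the orbit itself in $V(\R_{+})$, which in turn is a consequence of the descent structure afforded by Properties \textbf{B} and \textbf{C} outside $K$.
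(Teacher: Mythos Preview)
Your approach is circular, and the specific claim you lean on is false on the variety.

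The paper's proof of this lemma does not touch the dynamics at all: it invokes Theorem~\ref{thm:Teichmuller-identification} to write $o=\langle J,\Delta\rangle$ for some hyperbolic structure $J$, so that every coordinate arising in $\G.o$ has the form $\sqrt{2\sinh(\ell/2)}$ with $\ell$ the $J$-length of a one-sided simple closed geodesic on $\Sigma$. Discreteness of the length spectrum of a finite-area hyperbolic surface then finishes the argument immediately. In the logical architecture of the paper, Lemma~\ref{lem:discreteness} is an \emph{input} to Lemma~\ref{lem:alpha_bound_below} and to Theorem~\ref{prop:There-is-a-set-K}; you are trying to run the implication backwards.

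Concretely, your assertion that ``this inequality, together with its analogues and Property~\textbf{A}, cuts out a bounded locus in $V(\R_{+})$'' is not correct. With $x_{1}\le x_{2}\le x_{3}\le x_{4}$, failure of \textbf{B} is $x_{4}^{2}\le x_{1}^{2}+x_{2}^{2}+x_{3}^{2}$, which on the variety forces only $x_{1}x_{2}\le 4$; it places no bound on $x_{3},x_{4}$. Likewise failure of \textbf{A} ($x_{3}=x_{4}$) occurs along the unbounded family $x_{1}x_{2}=2+\epsilon$, $x_{3}=x_{4}=\sqrt{(x_{1}^{2}+x_{2}^{2})/\epsilon}$ for any $\epsilon>0$. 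What makes the failure locus bounded \emph{on the orbit} is precisely Lemma~\ref{lem:alpha_bound_below}, whose uniform gap $x_{i}x_{j}\ge 2+\epsilon$ is deduced from Lemma~\ref{lem:discreteness}. Your final paragraph acknowledges that finiteness of $\G.o\cap K$ is the crux and appeals to Baragar, but Baragar's results are for integer points, where discreteness is automatic; for a general $o\in V(\R_{+})$ this is exactly the statement to be proved.
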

\begin{proof}
For fixed $\Delta\in Z_{\ord}^{3}$ let $J$ be such that $\langle J,\Delta\rangle=o$.
Then $\G.o=\langle J,\G.\Delta\rangle$ and the coordinates are all
obtained as $\sqrt{2\sinh\left(\frac{1}{2}\ell\right)}$ where $\ell$
is the length of some one sided simple closed curve in $\Sigma$ w.r.t.
$J$. Since these values of $\ell$ are discrete in $\R_{+}$ and
$\sinh^{1/2}$ has bounded below derivative in $\R_{+}$ we are done.
\end{proof}
Lemma \ref{lem:discreteness} has the following fundamental consequence
that makes our counting arguments work.
\begin{lem}
\label{lem:alpha_bound_below}For every point $o\in V(\R_{+})$ there
is some $\epsilon=\epsilon(o)>0$ such that for all $x\in\G.o$ we
have
\[
x_{i}x_{j}\geq2+\epsilon,\quad1\leq i<j\leq4.
\]
\end{lem}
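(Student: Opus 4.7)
The plan is to argue by contradiction, reducing the desired uniform bound on the orbit to the pointwise inequality $x_{i}x_{j}>2$, which I first establish on all of $V(\R_{+})$.

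First I would show that every point $x\in V(\R_{+})$ satisfies $x_{i}x_{j}>2$ for each pair $i\neq j$. Combining the defining equation (\ref{eq:variety-def}) with the AM-GM inequalities $x_{1}^{2}+x_{2}^{2}\geq 2x_{1}x_{2}$ and $x_{3}^{2}+x_{4}^{2}\geq 2x_{3}x_{4}$ yields
\[
(x_{1}x_{2}-2)(x_{3}x_{4}-2)\geq 4.
\]
The two factors must then have the same sign; if both were negative, the quantities $2-x_{1}x_{2}$ and $2-x_{3}x_{4}$ would each lie in $(0,2)$, giving a product strictly less than $4$, a contradiction. Hence $x_{1}x_{2}>2$ and $x_{3}x_{4}>2$, and by the $S_{4}$-symmetry of (\ref{eq:variety-def}) all six such products exceed $2$ pointwise on $V(\R_{+})$.

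Next I would suppose, for contradiction, that the uniform bound fails, so there is a pair $(i,j)$ and a sequence $x^{(n)}\in\G.o$ with $x^{(n)}_{i}x^{(n)}_{j}\to 2$. Writing $o=\langle J,\Delta\rangle$, the coordinates of every element of $\G.o$ have the form $\sqrt{2\sinh(\ell(\gamma)/2)}$ with $\gamma$ a one sided simple closed geodesic with respect to $J$. Since such lengths are bounded below by the systole of $J$, these coordinates all lie in $[\delta,\infty)$ for some $\delta=\delta(o)>0$, and by Lemma \ref{lem:discreteness} any bounded subinterval of $[\delta,\infty)$ contains only finitely many of them. Combining $x^{(n)}_{i}x^{(n)}_{j}\to 2$ with $x^{(n)}_{i},x^{(n)}_{j}\geq\delta$ gives an upper bound on both quantities, so I may pass to a subsequence along which $x^{(n)}_{i}=a$ and $x^{(n)}_{j}=b$ are constant, necessarily with $ab=2$.

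Finally, with $\{k,\ell\}=\{1,2,3,4\}\setminus\{i,j\}$, applying (\ref{eq:variety-def}) to each $x^{(n)}$ and substituting $x^{(n)}_{i}x^{(n)}_{j}=ab=2$ gives
\[
a^{2}+b^{2}+(x^{(n)}_{k})^{2}+(x^{(n)}_{\ell})^{2}=2\,x^{(n)}_{k}x^{(n)}_{\ell},
\]
which rearranges to $(x^{(n)}_{k}-x^{(n)}_{\ell})^{2}=-(a^{2}+b^{2})<0$, the desired contradiction. The delicate point is that the pointwise bound $x_{i}x_{j}>2$ is sharp in the limit on $V(\R_{+})$, so the uniform strict inequality on the orbit genuinely relies on the discreteness provided by Lemma \ref{lem:discreteness}: a naive compactness argument would only deliver $ab\geq 2$, which is insufficient to trigger the sign contradiction in the variety equation.
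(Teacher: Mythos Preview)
Your proof is correct and follows essentially the same approach as the paper: establish the pointwise inequality $x_ix_j>2$ from the defining equation together with AM--GM, then invoke the discreteness of the coordinate values on $\G.o$ (Lemma~\ref{lem:discreteness}) to upgrade this to a uniform bound $x_ix_j\geq 2+\epsilon$. Your final paragraph is correct but redundant---once discreteness forces $ab=2$ for a point of $V(\R_+)$, you already contradict the pointwise inequality $x_ix_j>2$ proved at the outset, so the negative-square computation is not needed.
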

\begin{proof}
Let $x\in\G.o$ and without loss of generality suppose $x_{1}\leq x_{2}\leq x_{3}\leq x_{4}$.
Since from (\ref{eq:variety-def})
\[
x_{1}x_{2}x_{3}x_{4}-x_{3}^{2}-x_{4}^{2}=x_{1}^{2}+x_{2}^{2}>0
\]
we obtain 
\[
x_{1}x_{2}x_{3}x_{4}>x_{3}^{2}+x_{4}^{2}\geq2x_{3}x_{4}
\]
implying that $x_{1}x_{2}>2$. Since $x_{1}$ and $x_{2}$ are related
by (\ref{eq:x_equation}) to lengths of simple closed curves with
respect to a hyperbolic structure $J=J(o)$, they take on discrete
values in $\G.o$ that are bounded away from $0$ and so the possible
values of $x_{1}x_{2}$ with $2<x_{1}x_{2}<3$ are discrete.
\end{proof}
We also need the following theorem that establishes Theorem \ref{thm:Hurwitz}
for an arbitrary orbit of $\G$, outside a compact set depending on
the orbit.
\begin{thm}
\label{prop:There-is-a-set-K}For given $o\in V(\R_{+})$, there is
a compact $S_{4}$-invariant set $K=K(\G.o)\subset V(\R_{+})$ such
that Properties \textbf{A}, \textbf{B} and \textbf{C} hold for $x\in\G.o-K$.
Call a move that takes place at a non-(uniquely largest) entry of
$x$\textbf{ outgoing. }The set $\G.o-K$ is preserved under outgoing
moves.
\end{thm}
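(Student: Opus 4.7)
The plan is to show that Property C holds on all of $\G.o$ and that the loci where Properties A or B fail are bounded; $K$ will then be a sufficiently large sublevel set $\{x \in V(\R_+): \max_i x_i \leq M\}$. Two ingredients drive the argument: Lemma \ref{lem:alpha_bound_below}, giving the pair gap $x_i x_j \geq 2+\epsilon$ on $\G.o$, and the fact (noted in the proof of that lemma) that every individual coordinate of a point in $\G.o$ is bounded below by some $\delta = \delta(o) > 0$.

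For Property C, suppose $x_j$ is not the unique maximum of $x \in \G.o$ and pick $i \neq j$ with $x_i = \max_k x_k$. The Markoff move gives
\[
x_j' \;=\; x_i \prod_{k \neq i,j} x_k - x_j \;\geq\; (2+\epsilon) x_i - x_j \;\geq\; (1+\epsilon) x_i \;>\; x_i,
\]
using Lemma \ref{lem:alpha_bound_below} on the two-factor product and $x_i \geq x_j$. Since any other coordinate is at most $x_i$, the inequality $x_j' > x_l$ holds for all $l \neq j$, giving Property C.

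To bound the A-failing locus, use $S_4$-symmetry to place the equal maxima at positions $3$ and $4$, so $x_3 = x_4 = M$ and $x_1 \leq x_2 \leq M$. The defining equation reduces to $(x_1 x_2 - 2) M^2 = x_1^2 + x_2^2$, and $x_1^2 + x_2^2 \leq 2M^2$ forces $x_1 x_2 \leq 4$. Together with $x_1, x_2 \geq \delta$ this gives $x_1, x_2 \leq 4/\delta$, after which $M^2 \leq (x_1^2+x_2^2)/\epsilon$ is explicitly bounded. For B, assume A holds, order $x_1\leq x_2\leq x_3 < x_4$, and suppose $x_4^2 \leq x_1^2 + x_2^2 + x_3^2 \leq 3x_3^2$, i.e.\ $x_3 \geq x_4/\sqrt{3}$. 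The bound $\sum x_i^2 \leq 4 x_4^2$ gives $x_1 x_2 \leq 4 x_4/x_3 \leq 4\sqrt{3}$, and with $\delta$ this bounds $x_1, x_2$. Substituting $x_3^2 \leq x_3 x_4$ into the defining equation yields $x_3 x_4 (x_1 x_2 - 2) \leq 2(x_1^2 + x_2^2)$, and together with $x_1 x_2 - 2 \geq \epsilon$ and $x_3 \geq x_4/\sqrt{3}$ this produces an absolute bound on $x_4$.

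Now take $M$ larger than both of these bounds and set $K = \{x \in V(\R_+): \max_i x_i \leq M\}$. The elementary inequality $x_k x_l > 2$ on $V(\R_+)$ (a direct consequence of the defining equation) shows $x_k \geq 2/M$ throughout $K$, so $K$ is contained in a compact box and is manifestly $S_4$-invariant; by construction A, B, C all hold on $\G.o - K$. If $m_j$ is outgoing at some $x \in \G.o - K$, the Property C calculation shows $(m_j x)_j > \max_k x_k > M$, so $\max m_j(x) > M$ and $m_j(x) \in \G.o - K$. The main obstacle is the chain of inequalities in the B-failing analysis: neither the pair bound $2+\epsilon$ nor the coordinate lower bound $\delta$ is adequate in isolation, so one must combine them carefully with the defining equation.
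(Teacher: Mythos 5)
Your proof is correct, and it achieves the required conclusion via a route that differs from the paper's in one substantive way, namely the treatment of Property~\textbf{B}.

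The paper's argument for \textbf{A} proves more than uniqueness of the maximum: after excluding a compact set it establishes a uniform separation, that the largest entry exceeds all others by at least some $\delta_0 = \delta_0(o)>0$. This stronger quantitative statement is then fed into the paper's \textbf{B} argument: after showing $x_1 x_2 < 4$ and, via discreteness, bounding $x_1,x_2$, the paper derives $(x_4+x_3)(x_4-x_3) \leq x_1^2+x_2^2$ and uses $x_4 - x_3 \geq \delta_0$ to bound $x_4+x_3$. Your argument decouples the two: your \textbf{A} step only rules out exact ties (which is all \textbf{A} literally requires), and your \textbf{B} step does not invoke any separation. Instead, you observe that the failure $x_4 \leq x_4'$, via Vieta ($x_4 x_4' = x_1^2+x_2^2+x_3^2$), forces $x_3 \geq x_4/\sqrt 3$, a fixed \emph{ratio} bound; you then rearrange the defining equation to get $x_3 x_4 (x_1 x_2 - 2) \leq 2(x_1^2+x_2^2)$, and close the estimate using the uniform pair bound $x_1x_2 - 2 \geq \epsilon$ from Lemma~\ref{lem:alpha_bound_below} together with the lower bound $\delta$ on coordinates. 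The trade is that you lean harder on $\epsilon$ where the paper leans on the near-tie separation and discreteness of $(x_1,x_2)$; both sets of tools are already available at that point. Your version is slightly more self-contained in that \textbf{A} and \textbf{B} are independent, while the paper's is more economical in reusing the Cassels-style quadratic twice (once in \textbf{A}, once in \textbf{B}). Your treatments of \textbf{C}, of the compactness and $S_4$-invariance of $K$, and of stability under outgoing moves all match the paper in substance.

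One small point worth spelling out if you write this up fully: when you ``place the equal maxima at positions 3 and 4'', you should note that the case of a three- or four-way tie (so $x_2 = M$ as well) is subsumed, since you only require $x_1 \leq x_2 \leq M$ rather than $x_2 < M$; the conclusion $x_1 x_2 \leq 4$ still forces $M \leq 4/\delta$ in those cases.
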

\begin{proof}
Fix $o$ throughout the proof. Lemma \ref{lem:alpha_bound_below}
tells us that for some $\epsilon>0$, $x_{i}x_{j}\geq2+\epsilon$
for all $x\in\G.o$ and $1\leq i<j\leq4.$ Let $K_{0}:=\{(x_{1},x_{2},x_{3},x_{4})\in V(\R_{+})\::\|x\|_{\infty}\leq10\:\}.$
We will choose $K$ such that $K_{0}\subset K$. 

\textbf{A. }Take $x\in\G.o$. We'll prove something stronger than
property \textbf{A} for suitable choice of $K$, and use this later
in the proof. Suppose for simplicity $x_{1}\leq x_{2}\leq x_{3}\leq x_{4}.$
Write $x_{4}=x_{3}+\delta$ and assume $\delta<\delta_{0}$ where
$\delta_{0}<1$ is small enough to ensure
\begin{equation}
(1+\delta x_{3}^{-1})x_{1}x_{2}-1-(1+\delta x_{3}^{-1})^{2}>\frac{\epsilon}{2}\label{eq:e-on-two-lower}
\end{equation}
given $x_{3}\geq9$ (which we know to be the case since $x\notin K_{0}$).
We will enlarge $K_{0}$ so that this is a contradiction. From (\ref{eq:variety-def})
\begin{equation}
x_{1}^{2}+x_{2}^{2}+x_{3}^{2}(1+(1+\delta x_{3}^{-1})^{2}-(1+\delta x_{3}^{-1})x_{1}x_{2})=0,\label{eq:nearly-equal}
\end{equation}
so $x_{3}^{2}(3+(1+\delta x_{3}^{-1})^{2}-(1+\delta x_{3}^{-1})x_{1}x_{2})>0$
and hence
\begin{equation}
x_{1}x_{2}\leq\frac{3+(1+\delta x_{3}^{-1})^{2}}{(1+\delta x_{3}^{-1})}<5\label{eq:5bound}
\end{equation}
given $x\notin K_{0}$ (so $x_{3}\geq9)$ and the assumption $\delta<1$.
On the other hand (\ref{eq:e-on-two-lower}) and (\ref{eq:nearly-equal})
now imply that if $\eta>0$ is a lower bound for all coordinates of
$\G.o$ then
\[
x_{3}^{2}\leq\frac{2}{\epsilon}\left(x_{1}^{2}+x_{2}^{2}\right)\leq\frac{4}{\epsilon}x_{2}^{2}\leq\frac{4}{\eta^{2}\epsilon}x_{1}^{2}x_{2}^{2}\leq\frac{100}{\eta^{2}\epsilon},
\]
where the last inequality is from (\ref{eq:5bound}).

Now let 
\begin{align*}
K_{1} & =\{x\in V(\R_{+})\::\:\|x\|_{\infty}\leq\frac{10}{\eta\sqrt{\epsilon}}+1\}\cup K_{0}.
\end{align*}
\emph{We proved there is $\delta=\delta(o)$ such that for $x\in\G.o-K_{1}$,
there is an entry of $x$ that is $\geq\delta$ more than all the
other entries.}

\textbf{B. }Take $x\in\G.o-K_{1}$ with $x_{1}\leq x_{2}\leq x_{3}<x_{4}.$
We follow the method of Cassels \cite[pg. 27]{CASSELS}. Consider
the quadratic polynomial
\[
f(T)=T^{2}-x_{1}x_{2}x_{3}T+x_{1}^{2}+x_{2}^{2}+x_{3}^{2}.
\]
Then $f$ has roots at $x_{4}$ and $x'_{4}$ where $x'_{4}$ is the
last entry of $m_{4}(x)$. Property \textbf{B }holds at $x$ unless
$x_{3}<x_{4}\leq x_{4}'$, in which case $f(x_{3})>0$ giving
\[
x_{3}^{2}(4-x_{1}x_{2})\geq x_{3}^{2}(2-x_{1}x_{2})+x_{1}^{2}+x_{2}^{2}>0.
\]
Therefore $x_{1}x_{2}<4$. By discreteness of the coordinates of $\G.o$
this means there are finitely many possibilities for $x_{1}$ and
$x_{2}$. Now $x'_{4}\geq x_{4}$ directly implies
\[
x_{1}x_{2}x_{3}\geq2x_{4}
\]
so $2x_{4}^{2}\leq x_{1}x_{2}x_{3}x_{4}=x_{1}^{2}+x_{2}^{2}+x_{3}^{2}+x_{4}^{2}$
and so
\[
(x_{4}+x_{3})(x_{4}-x_{3})\leq x_{1}^{2}+x_{2}^{2}\leq M
\]
for some $M$ depending on the finitely many possible values for $x_{1},x_{2}$.
Since we know $x_{4}-x_{3}\geq\delta$ we obtain
\[
x_{4}+x_{3}\leq\frac{M}{\delta}
\]
so $x_{3}\le x_{4}\leq M\delta^{-1}$. Let $K_{2}:=\{x\in V(\R_{+})\::\:\|x\|_{\infty}\leq M\delta^{-1}\}\cup K_{1}$.
This establishes \textbf{B }for $x\in\G.o-K_{2}$.

\textbf{C. }Take $x\in\G.o$ with $x_{1}\leq x_{2}\leq x_{3}\leq x_{4}.$
Then for $1\leq j\leq3$, 
\[
(m_{j}(x))_{j}=\frac{x_{1}x_{2}x_{3}x_{4}}{x_{j}}-x_{j}\geq x_{1}x_{2}x_{4}-x_{3}=x_{4}\left(x_{1}x_{2}-\frac{x_{3}}{x_{4}}\right)\geq x_{4}(1+\epsilon)>x_{4}.
\]
by Lemma \ref{lem:alpha_bound_below}. This establishes \textbf{C}.

We established \textbf{A, B }for $x\in\G.o-K$ with $K=K_{2}$, and
\textbf{C }for any $x\in\G.o$. It is clear from the previous that
$\G.o-K$ is stable under outgoing moves.
\end{proof}

\section{The topology of the curve complex }

\label{sec:curve-complex}

Our first goal in this section is to prove the following topological
theorem. Let $G$ be the graph whose vertices are $3$-simplices $\{\alpha,\beta,\gamma,\delta\}$
of $Z$ with an edge between two vertices if they share a dimension
$2$ face. 
\begin{thm}
\label{thm:reg-tree}$G$ is a 4-regular tree.
\end{thm}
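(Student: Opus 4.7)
The plan is to prove the two constituent assertions separately: that $G$ is $4$-regular, and that $G$ is a tree.

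The regularity is essentially immediate from the unique-flip observation of Huang and Norbury already quoted before the theorem: for any $3$-simplex $\{\alpha,\beta,\gamma,\delta\}$, each of its four curves admits a unique flip to another one-sided simple closed curve producing a new $3$-simplex sharing the other three. Hence every vertex of $G$ has exactly four neighbors.

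For the tree property I would transport the question to $V(\R_{+})$ using Theorem \ref{thm:Teichmuller-identification}. Fix a reference hyperbolic structure $J_{0}\in T(\Sigma)$ and a reference ordered $3$-simplex $\Delta_{0}\in Z_{\ord}^{3}$, and consider the $\G$-equivariant evaluation
\[
\Theta:Z_{\ord}^{3}\to V(\R_{+}),\qquad \Delta\mapsto\langle J_{0},\Delta\rangle.
\]
This converts the flip-graph on the $\G$-orbit of $\Delta_{0}$ inside $Z_{\ord}^{3}$ into the ``Markoff graph'' on the $\G$-orbit $\G.\Theta(\Delta_{0})\subset V(\R_{+})$, whose edges connect $y$ to $m_{j}(y)$ for $j=1,\ldots,4$. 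The key dynamical input I would use is Theorem \ref{prop:There-is-a-set-K}, which for any orbit supplies a compact set $K$ outside which outgoing Markoff moves strictly decrease the largest coordinate. A ping-pong/descent argument along a nontrivial reduced word $g=m_{i_{k}}\cdots m_{i_{1}}$ with $i_{j}\neq i_{j+1}$ --- in the spirit of Section 3 and \cite{GMR} --- should show that no such $g$, nor any such $g$ composed with a coordinate permutation $\pi\in S_{4}$, can stabilize a point of $V(\R_{+})$: after enough outgoing letters the maximum coordinate has strictly decreased by more than any bounded reshuffling by $\pi$, contradicting $g.x=\pi.x$. This strengthened freeness implies both that the Markoff graph on any single $\G$-orbit is a $4$-regular tree (no reduced word fixes a point) and that this tree structure descends intact to the unordered quotient (no reduced cycle in $G$ lifts, since no reduced word acts as a permutation).

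What remains is the connectedness of $G$, equivalently the transitivity of $\G$ on $Z_{\ord}^{3}$; this I expect to be the main obstacle, since it is a topological rather than a dynamical statement about $\Sigma=P^{2}(\R)-\{3\text{ points}\}$. My plan is to combine descent with a Penner-style argument: for an arbitrary $\Delta\in Z_{\ord}^{3}$, apply outgoing Markoff moves to $\Theta(\Delta)$ to reach the compact set $K(\G.\Theta(\Delta))$, where Lemma \ref{lem:discreteness} confines the orbit to finitely many points; then identify the resulting minimal configuration with a canonical ideal cell decomposition of $\Sigma$, reducing transitivity of flips to the classical connectedness of Penner's flip graph on ideal decompositions. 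Combined with regularity and absence of cycles from the previous paragraph, this identifies $G$ with the $4$-regular tree.
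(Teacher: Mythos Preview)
Your acyclicity argument is workable in spirit but both overcomplicated and imprecisely stated. The paper avoids Theorem~\ref{prop:There-is-a-set-K} entirely here: it simply chooses $J$ so that $\langle J,\tilde{\Delta}_0\rangle=(2,2,2,2)$, and then a putative cycle in $G$ becomes a nonrepeating word in the $m_j$ fixing $(2,2,2,2)$, which is impossible by Hurwitz's integer infinite descent (Corollary~\ref{cor:uniqueness}). Your version, using properties \textbf{A}, \textbf{B}, \textbf{C} outside a compact $K$ for a generic $o$, has to cope with the part of the path that may lie in $K$, and your sentence ``after enough outgoing letters the maximum coordinate has strictly decreased'' is backwards (Property~\textbf{C} says outgoing moves \emph{increase} the maximum). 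One can repair this, but the $(2,2,2,2)$ trick is both shorter and cleaner.

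The connectedness argument, on the other hand, has a genuine gap. Your descent takes $\Theta(\Delta)$ into the compact set $K(\G.\Theta(\Delta))$, but this set depends on the $\G$-orbit of $\Theta(\Delta)$ in $V(\R_+)$; you do not know in advance that $\Theta(\Delta)$ and $\Theta(\Delta_0)$ lie in the same $\G$-orbit, which is precisely the statement you are trying to prove. Even if both descents terminate at points with the same coordinates, you have not shown that the map $Z_{\ord}^3\to V(\R_+)$, $\Delta\mapsto\langle J_0,\Delta\rangle$, is injective, so equal images would not yet give equal (or flip-connected) simplices. Your proposed fallback to ``Penner's flip graph on ideal decompositions'' is not wrong in principle, but it requires establishing a precise dictionary between $3$-simplices of $Z$ and ideal cell decompositions of $\Sigma$, which is substantial and not supplied. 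The paper instead uses a purely topological input: Scharlemann's explicit description of the $1$-skeleton of $Z$ (Theorem~\ref{thm:The--skeleton-of}) to show that links of vertices and edges in $Z$ are contractible (Lemma~\ref{lem:LINKS}), and then deduces connectedness of $G$ by pushing paths in $Z$ off codimension~$>1$ simplices. That topological step is exactly what your dynamical outline is missing.
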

This theorem is stated without proof in \cite[pg. 9]{HN}, and then
used throughout the rest of the paper \cite{HN}. We have been careful
here only to use results from \cite{HN} that are deduced independently
from Theorem \ref{thm:reg-tree}, to avoid circularity.

We prove Theorem \ref{thm:reg-tree} in two steps, using the following
theorem of Scharlemann:
\begin{thm}[{\cite[Theorem 3.1]{SCH}}]
\label{thm:The--skeleton-of}The $1$-skeleton of $Z$ is the $1$-skeleton
of the complex obtained by repeated stellar subdivision of the dimension
$2$ faces of a tetrahedron. 
\end{thm}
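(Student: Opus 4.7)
The plan is to construct inductively an isomorphism between $Z$ (at least through dimension $2$) and an iterated stellar subdivision of an initial tetrahedron $T_0$; this then gives the $1$-skeleton identification. Each stellar subdivision of a $2$-face of $T_0$ will correspond to a ``flip'' on the curve side that introduces exactly one new one-sided simple closed curve.

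First I would exhibit a base $3$-simplex $\Delta_0=\{\alpha_1,\alpha_2,\alpha_3,\alpha_4\}\in Z^3$: four one-sided simple closed curves on $\Sigma$ that pairwise meet exactly once. In a symmetric model of $P^{2}(\R)-\tpt$---a disk with antipodal boundary identification and three interior punctures placed with $3$-fold symmetry, say---such a configuration is explicit and can be verified by a direct picture. Identify these four curves with the vertices of $T_0$. Next I would prove a local flip lemma: each $2$-simplex $\{x,y,z\}\in Z^2$ is contained in exactly two $3$-simplices of $Z$. The approach is to cut $\Sigma$ open along a regular neighborhood of $x\cup y\cup z$ and classify, in the resulting surface with boundary, the essential arcs whose image in $\Sigma$ is a one-sided simple closed curve meeting each of $x,y,z$ once; an Euler-characteristic count combined with a short case analysis should pin the number down to two. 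On the stellar side every $2$-face lies in exactly two neighbouring tetrahedra at every stage, so the local structures match.

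The main obstacle is the connectivity/descent step: show that every one-sided simple closed curve on $\Sigma$ is obtained from $\Delta_0$ by a finite sequence of flips, and that the new curve introduced at a flip of a $2$-simplex $\{x,y,z\}$ is $Z^1$-adjacent to no previously-constructed curve other than $x,y,z$ themselves. For connectivity I would use a complexity function such as $c(\gamma):=\sum_{i=1}^{4}i(\gamma,\alpha_i)$, where $i(\cdot,\cdot)$ is geometric intersection number, and show that whenever $c(\gamma)$ exceeds the value it takes on the base curves there is a $3$-simplex $\{\gamma,x,y,z\}\in Z^3$ whose alternate completion $\gamma\mapsto\gamma'$ satisfies $c(\gamma')<c(\gamma)$; descent then terminates at $\Delta_0$. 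The ``only edges'' claim requires a separate intersection argument: if a newly introduced $\gamma$ shared a $Z^1$-edge with some older $\delta$ outside the $2$-face of its flip, one could build a $3$-simplex containing both $\gamma$ and $\delta$ and derive a contradiction either with the flip lemma or with the monotonicity of the descent. Combining the three steps, the inductive construction of $Z$ by flips matches the iterated stellar subdivision of $2$-faces vertex-by-vertex and edge-by-edge, proving the theorem.
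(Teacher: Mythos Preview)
The paper does not supply a proof of this statement: it is quoted from Scharlemann \cite[Theorem 3.1]{SCH} and used as a black box in the proof of Lemma~\ref{lem:LINKS} and Corollary~\ref{cor:-is-connected.}. So there is no argument in the paper to compare against; you are in effect proposing to reprove Scharlemann's theorem from scratch.

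Your outline has the right overall shape, but the two places you yourself flag as ``the main obstacle'' are exactly where all the content lies, and your treatment of them is too thin to count as a proof. For the descent step you assert that whenever $c(\gamma)$ exceeds the base value there is a $3$-simplex $\{\gamma,x,y,z\}$ whose flip strictly lowers $c$; you give no mechanism (bigon surgery, arc combinatorics in the cut-open surface, or otherwise) that produces such a decreasing flip, and there is no a priori reason the particular complexity $\sum_i i(\gamma,\alpha_i)$ should be monotone under some flip. For the ``only edges'' claim you say one would ``derive a contradiction either with the flip lemma or with the monotonicity of the descent,'' but neither of those controls the geometric intersection number between a newly flipped curve $\gamma'$ and an arbitrary previously constructed curve $\delta$ not in the $2$-face $\{x,y,z\}$. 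This edge control is precisely what distinguishes the stellar-subdivision $1$-skeleton from a generic flag complex having the same $3$-simplex adjacency graph, so it cannot be finessed.

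Put differently: your flip lemma together with a working descent would already give the tree structure of $G$ (the paper's Theorem~\ref{thm:reg-tree}) rather directly, but the finer $1$-skeleton statement of Theorem~\ref{thm:The--skeleton-of} needs the additional edge-control step, which in your proposal is only a promissory note.
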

\begin{cor}
$Z$ is connected.
\end{cor}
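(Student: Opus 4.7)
The plan is to deduce connectedness of $Z$ directly from Theorem \ref{thm:The--skeleton-of} by observing that connectedness is a property of the $1$-skeleton alone, and that repeated stellar subdivision of a connected starting complex can only produce connected complexes.

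First I would recall that a simplicial complex is (path-)connected if and only if its $1$-skeleton is connected as a graph, since every simplex is path-connected and every edge of a higher simplex already lies in the $1$-skeleton. Therefore it suffices to show that the $1$-skeleton of $Z$ is a connected graph, and by Theorem \ref{thm:The--skeleton-of} this reduces to showing that the $1$-skeleton of any complex obtained from a tetrahedron by iterated stellar subdivisions of $2$-faces is connected.

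Next I would argue this by induction on the number of subdivisions performed. The base case is the tetrahedron itself, whose $1$-skeleton is the complete graph $K_{4}$ and is manifestly connected. For the inductive step, suppose $Y$ is a complex obtained by some finite sequence of stellar subdivisions whose $1$-skeleton is connected, and let $Y'$ be obtained from $Y$ by subdividing a $2$-simplex $\{a,b,c\}$ of $Y$. The effect on the $1$-skeleton is to introduce a single new vertex $v$ together with the three new edges $\{v,a\},\{v,b\},\{v,c\}$; the previously existing vertices and edges remain in $Y'$. Since the $1$-skeleton of $Y$ was connected and the new vertex $v$ is joined by an edge to the already-connected vertex $a$ (say), the $1$-skeleton of $Y'$ is still connected. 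This finishes the induction.

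Finally I would note that the $1$-skeleton of $Z$ is the (ascending) union of the $1$-skeleta produced by some, possibly infinite, sequence of such subdivisions; a union of connected graphs all containing a common vertex (for example, any vertex of the original tetrahedron) is connected, so the $1$-skeleton of $Z$ is connected, and hence so is $Z$. I expect no real obstacle here: the only subtle point is the reduction from connectedness of $Z$ to connectedness of its $1$-skeleton and the passage from finitely to possibly infinitely many subdivisions, both of which are standard.
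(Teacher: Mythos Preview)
Your proposal is correct and matches the paper's approach: the paper states this corollary immediately after Scharlemann's Theorem~\ref{thm:The--skeleton-of} without any explicit proof, treating it as an obvious consequence, and your argument spells out precisely why it follows (connectedness depends only on the $1$-skeleton, and iterated stellar subdivision only adds vertices joined to existing ones). There is nothing to add.
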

Recall that the clique complex of a graph $H$ has the same vertex
set as $H$ and a $k$-simplex for each clique (complete subgraph)
of $H$ of size $k+1$. Note that $Z$ is the clique complex of its
1-skeleton.
\begin{figure}
\includegraphics{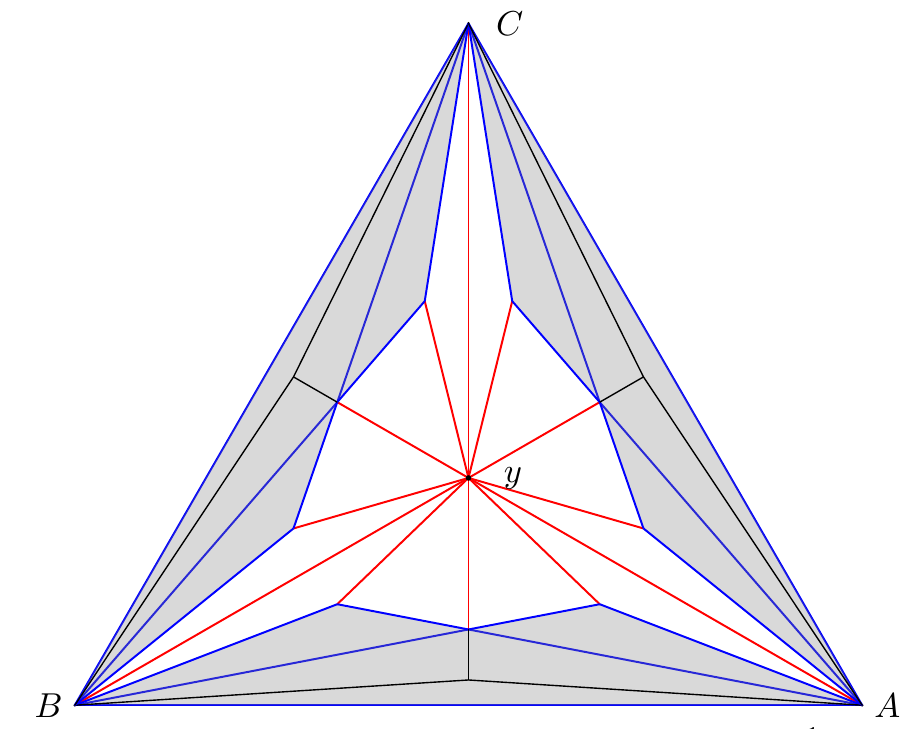}

\caption{\label{fig:vertex_link}This shows 2 generations of repeated stellar
subdivision of $T_{1}(y),T_{2}(y)$ and $T_{3}(y)$. The link in $Z$ of the vertex $y$ can be identified with
the closure of the shaded region, together with a triangle with vertices
$A,B,C$.}
\end{figure}

\begin{figure}
\includegraphics{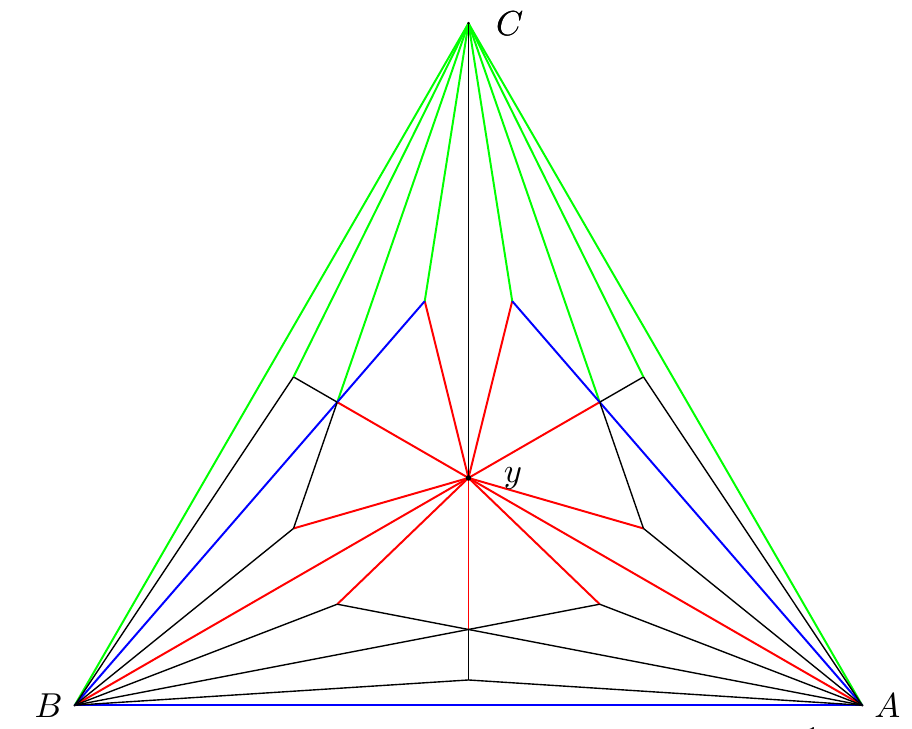}

\caption{\label{fig:edge_link}This shows 2 generations of repeated stellar
subdivision of $T_{1}(y),T_{2}(y)$ and $T_{3}(y)$, the link in $Z$ of the
edge between $y$ and $C$ can be identified
with the closure of the blue edges.}
\end{figure}

\begin{lem}
\label{lem:LINKS}The link of a vertex or edge in $Z$ is contractible.
In particular all links of simplices of codimension $>1$ in $Z$
are connected.
\end{lem}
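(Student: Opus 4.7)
The approach is to exploit Theorem \ref{thm:The--skeleton-of}, which identifies the 1-skeleton of $Z$ with that of an iterated stellar subdivision of a tetrahedron $T_{0}$, and to describe the relevant links as simplicial complexes that are built up inductively as the subdivision grows.

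For the link of a vertex $y \in Z^{0}$, either $y$ is one of the four original vertices of $T_{0}$ (opposite some 2-face $\{a,b,c\}$), or $y$ was introduced as a subdivision vertex inside a triangle $\{a,b,c\}$ of the current triangulation. In either case, at the moment $y$ is born the only vertices of $Z$ adjacent to $y$ are $a,b,c$, and the unique 4-clique of $Z$ containing $y$ is $\{y,a,b,c\}$ (using that $a,b,c$ are pairwise adjacent, an invariant preserved under stellar subdivision). Hence the link of $y$ is initially the filled 2-simplex $\{a,b,c\}$, a topological disk. Each later subdivision of a triangle $\{y,a',b'\}$ containing $y$, at a new vertex $y'$, attaches the 2-simplex $\{y',a',b'\}$ to the link along the edge $a'b'$ (from the new 4-clique $\{y,y',a',b'\}$). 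I would verify inductively that at the moment of this attachment the edge $a'b'$ lies on the boundary of the current link disk: the 2-simplices of the link containing the edge $a'b'$ correspond to 4-cliques of the form $\{y,a',b',c'\}$, and before the new subdivision there is exactly one such $c'$, namely the third vertex of the unique 2-simplex of the current link that shares the edge $a'b'$. Attaching a 2-simplex to a disk along a boundary edge preserves the disk property, so by induction the link of $y$ is always a topological disk, hence contractible.

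For the link of an edge $\{u,v\} \in Z^{1}$, I would argue analogously. The link is a 1-complex whose vertices are those vertices of $Z$ adjacent to both $u$ and $v$, and whose edges are the 4-cliques of $Z$ containing $\{u,v\}$. When $\{u,v\}$ first appears in the 1-skeleton, the triangles of the subdivision containing it are exactly two, say $\{u,v,w_{1}\}$ and $\{u,v,w_{2}\}$, and $w_{1}w_{2}$ is an edge of $Z$ (automatic from the tetrahedron for the original edges; inherited from the parent triangle when $\{u,v\}$ was created by a subdivision). Hence the initial link is the single edge $w_{1}w_{2}$. Each subsequent subdivision of a triangle $\{u,v,w\}$ at a new vertex $w''$ adds $w''$ to the link together with the edge $ww''$ arising from the 4-clique $\{u,v,w,w''\}$; crucially, $w''$ is adjacent only to $u,v,w$ among pre-existing vertices, so no other 4-clique containing $\{u,v,w''\}$ is created. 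Hence the link of $\{u,v\}$ remains a path, which is contractible.

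The key technical point in both arguments is the inductive verification that a newly introduced subdivision vertex is adjacent only to the three corners of the triangle being subdivided among pre-existing vertices. This controls the set of 4-cliques containing $y$ or containing $\{u,v\}$, ensuring that the newly attached simplex is glued along the predicted boundary edge, and that the disk (resp.\ path) topology --- and hence contractibility --- is preserved at each step.
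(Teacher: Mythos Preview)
Your proposal is correct and follows essentially the same approach as the paper: both use Scharlemann's description of the $1$-skeleton of $Z$ as an iterated stellar subdivision of a tetrahedron, identify $Z$ as the clique complex of this $1$-skeleton, and then analyze how the link of a vertex (resp.\ edge) is assembled from the subdivision data to see that it is a disk (resp.\ a segment). The paper carries this out pictorially, drawing the first few generations of subdivision and reading off the link from the figure, while you recast the same analysis as an explicit induction on the subdivision stages; the mathematical content is the same.
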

\begin{proof}
Let $Y$ be the 1 dimensional subcomplex of Theorem \ref{thm:The--skeleton-of}.
Let $\Delta^{(2)}$ be the $2$-skeleton of a standard $3$-simplex
$\Delta.$

Since $Z$ is the clique complex of $Y$ it is possible to characterize
links of simplices in $Z$ purely in terms of cliques in $Y.$ Precisely,
the link of a simplex $s$ in $Z$ is the collection of all cliques
in $Y$ that are disjoint from $s$ but that together with $s$ form
a clique.

We view $Y$ as a graph drawn on $|\Delta^{(2)}|$. For every vertex
$y$ of $Y$ there are $3$ other vertices $A(y),B(y),C(y)$ of $Y$
such that
\begin{enumerate}
\item $y,A,B$ and $C$ are a clique in $Y$.
\item Every vertex adjacent to $y$ in $Y$ is contained in one of the triangles
$T_{1}(y)$,$T_{2}(y)$ or $T_{3}(y)$ in $|\Delta^{(2)}|$ with vertices
$(A,B,y)$, $(B,C,y)$ or $(A,C,y)$ respectively. In the case $y$
is a vertex of $\Delta$, these triangles are faces of $|\Delta|.$
Otherwise they are all contained in the same face of $|\Delta|$ that
contains $y$.
\item More precisely, every vertex of $Y$ adjacent to $y$, and all edges
between these vertices, are generated by repeated stellar subdivision
of the triangles $T_{1}$,$T_{2}$ and $T_{3}$ together with the
edges and vertices of the $T_{i}$.
\end{enumerate}
These observations mean that all links of vertices of $Z$ look the
same and can be calculated by drawing the same picture. Similarly
all links of edges can be calculated in the same way.

Figure \ref{fig:vertex_link} shows the link of the central vertex
$y$, truncating after 2 iterations of the stellar subdivision. The
red edges are incident with $y$. The blue edges are edges not incident
with $y$ but whose vertices are adjacent to $y$. Cliques in the
link of $y$ in $Z$ are cliques relative to blue edges. We observe
that since the drawing of this part of $Y$ is planar, the blue cliques
of size 3 other than $\{A,B,C\}$ bound nonoverlapping regions, so
we can identify the geometric realization of the link of $y$ in $Z$
with the closure of the shaded triangles here, together with an extra
triangle with vertices $A,B,C$. This geometric realization is visibly
a topological disc. The effect of iterating stellar subdivision is
that the shaded region encroaches inwards, but its homotopy type doesn't
change.

Similarly the link of the edge between $y$ and $C$ is approximated
in Figure \ref{fig:edge_link}. Green edges emanate from $C$ and
red emanate from $y$. A blue edge has both vertices adjacent to both
$y$ and $C$ (i.e. having incident red and green edges). The closure
of the blue edges hence approximates the link of $\{y,C\}$ in $Z$
and is homeomorphic to a line segment. Iterating stellar subdivision
extends the segment on both signs and as before, the homotopy type
doesn't change. 
\end{proof}
Since $Z$ is connected we obtain the following consequence of Lemma
\ref{lem:LINKS} (cf. Hatcher \cite[pg. 3, proof of Corollary]{H}).
The basic idea is to use Lemma \ref{lem:LINKS} to inductively deform
any path in $Z$ away from codimension $>1$ simplices.
\begin{cor}
\label{cor:-is-connected.}$G$ is connected.
\end{cor}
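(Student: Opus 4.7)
The plan is to reduce connectedness of $G$ to (i) connectedness of $Z$ (already noted as a corollary of Scharlemann's theorem) and (ii) connectedness of the vertex and edge links of $Z$ (supplied by Lemma \ref{lem:LINKS}), via a two-level induction on dimension.

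For the global step, given $\sigma,\sigma'\in Z^{3}$, pick $v\in\sigma$ and $v'\in\sigma'$ and use connectedness of $Z$ to produce an edge-path $v=v_{0},v_{1},\dots,v_{n}=v'$ in the $1$-skeleton. By purity of $Z$, each edge $\{v_{i},v_{i+1}\}$ lies in some $\tau_{i}\in Z^{3}$, and the consecutive pairs $\sigma,\tau_{0}$, $\tau_{i},\tau_{i+1}$, $\tau_{n-1},\sigma'$ each share a common vertex ($v_{0}$, $v_{i+1}$, $v_{n}$ respectively). So the problem reduces to the local claim: any two $3$-simplices of $Z$ sharing a vertex lie in the same component of $G$.

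For the local step, fix $v\in Z^{0}$ and set $L:=\mathrm{lk}_{Z}(v)$. Then $L$ is pure of dimension $2$ (inherited from purity of $Z$), its $2$-simplices correspond bijectively to the $3$-simplices of $Z$ containing $v$, and two such $2$-simplices share a $1$-simplex of $L$ iff the corresponding $3$-simplices share a $2$-face containing $v$. It therefore suffices to prove that the top-dimensional dual graph of $L$ is connected. Running exactly the same two-step argument one dimension down inside $L$: connectedness of $L$ together with its purity reduce the problem to two $2$-simplices of $L$ containing a common vertex $w$, and these correspond under the link construction to two edges of $\mathrm{lk}_{L}(w)=\mathrm{lk}_{Z}(\{v,w\})$. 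By Lemma \ref{lem:LINKS} this link is connected, and any two edges of a connected graph are joined by a chain of edges sharing vertices, which completes the local step.

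The only subtlety is that the recursion demands connectedness of the link of a vertex inside a vertex link of $Z$, which is precisely the edge-link connectedness part of Lemma \ref{lem:LINKS}. Once both vertex and edge links of $Z$ are known to be connected, the rest of the proof is a routine combinatorial unwinding, with no further topological input needed.
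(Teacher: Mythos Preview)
Your argument is correct and is essentially the combinatorial unwinding of the paper's one-line sketch, which merely says to ``use Lemma \ref{lem:LINKS} to inductively deform any path in $Z$ away from codimension $>1$ simplices'' and points to Hatcher for the mechanism. The inputs are identical (connectedness of $Z$, plus connectedness of vertex and edge links), and your two-level dimension induction is precisely the combinatorial translation of pushing a path off the codimension $\geq 2$ skeleton; you have simply written out what the paper leaves as a citation.
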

\begin{lem}
\label{lem:G-is-acyclic.}G is acyclic.
\end{lem}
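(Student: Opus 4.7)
The plan is to induct on the number of stellar subdivisions used to build the $1$-skeleton of $Z$ from the $2$-faces of a tetrahedron, as furnished by Theorem~\ref{thm:The--skeleton-of}. Let $Y_n$ be the $1$-skeleton after $n$ subdivisions, and let $G_n$ be the analogue of $G$ built from $Y_n$: its vertices are the $4$-cliques of $Y_n$ and its edges join pairs sharing a $3$-clique. I will show by induction on $n$ that $G_n$ is a finite tree, together with the companion statement that every currently undivided triangle in $Y_n$ has a unique ``opposite vertex''---i.e.\ a unique fourth vertex of $Y_n$ adjacent to all three of its vertices. The base case $n=0$ is immediate: $G_{0}$ is a single point, and each $2$-face of the tetrahedron has the remaining vertex as its unique opposite.

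For the inductive step, subdivide a triangle $\{P,Q,R\}$ whose unique opposite in $Y_n$ is $S$, adding a new vertex $v$ joined only to $P,Q,R$. Since $v$ has exactly three neighbors, the only new $4$-clique of $Y_{n+1}$ is $\{v,P,Q,R\}$. Three of its four $3$-clique faces contain $v$ and so cannot lie in any other $4$-clique: any further vertex would need to be adjacent to $v$ and hence already lie in $\{P,Q,R\}$. The remaining face $\{P,Q,R\}$ is shared with the unique old $4$-clique $\{S,P,Q,R\}$, by the inductive hypothesis. Hence $G_{n+1}$ is $G_{n}$ with a single pendant vertex attached and is still a tree. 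The companion claim is preserved as well: a previously undivided triangle different from $\{P,Q,R\}$ must contain a vertex outside $\{P,Q,R\}$, and $v$ is not adjacent to that vertex, so $v$ does not become a new opposite; the three newly created triangles $\{v,P,Q\}$, $\{v,P,R\}$, $\{v,Q,R\}$ have $R$, $Q$, $P$ respectively as unique opposites, again because any opposite must be adjacent to $v$.

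Any $4$-clique or shared $3$-clique in $Y$ involves only finitely many vertices and hence lies in some $Y_n$, so $G=\bigcup_{n}G_{n}$ is the directed union of the increasing chain of finite trees $G_{n}$ under subgraph inclusions; no cycle can be introduced in the limit, and $G$ is a tree. I expect the only real subtlety to be threading the ``unique opposite vertex'' companion claim carefully alongside the tree induction; once Scharlemann's theorem has done the topological heavy lifting, the remaining combinatorics is routine.
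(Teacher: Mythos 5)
Your proof is correct, and it is a genuinely different route from the one the paper takes. The paper proves acyclicity by an appeal to arithmetic dynamics: it picks (via Theorem~\ref{thm:Teichmuller-identification}) a hyperbolic structure $J$ with $\langle J,\Delta\rangle=(2,2,2,2)$ and observes that a cycle in $G$ would produce a nontrivial nonrepeating word in the Markoff moves $m_1,\dots,m_4$ fixing $(2,2,2,2)$, contradicting the Markoff--Hurwitz uniqueness statement (Corollary~\ref{cor:uniqueness}). Your proof stays entirely inside the combinatorics furnished by Scharlemann's Theorem~\ref{thm:The--skeleton-of}: you induct on the stellar subdivision process, tracking the auxiliary invariant that every $2$-face of the current triangulated sphere has a unique ``opposite'' vertex, and show that each subdivision merely attaches a pendant vertex to the growing finite tree $G_n$. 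The chain argument at the end is sound since every $4$-clique and every shared $3$-clique is a finite configuration appearing at some finite stage, and $G_n$ is easily seen to be an induced subgraph of $G_{n+1}$. One small remark: your argument actually proves the stronger statement that $G$ is a tree, so it subsumes the paper's Corollary~\ref{cor:-is-connected.} (which the paper instead deduces from the link computation of Lemma~\ref{lem:LINKS} plus connectedness of $Z$). The trade-off is that the paper's route reuses the Markoff machinery it has already set up and will need later, whereas yours is self-contained and topological but requires carefully threading the ``unique opposite vertex'' companion claim through the induction; you have done that threading correctly, including the key observation that the newly created vertex $v$ has exactly the three neighbours $P,Q,R$, so it cannot become a second opposite for any surviving $2$-face nor participate in any $4$-clique other than $\{v,P,Q,R\}$.
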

\begin{proof}
Suppose $G$ has a cycle, so that there is a series of nonrepeating
flips that map a vertex $\Delta_{0}=\{\alpha,\beta,\gamma,\delta\}$
to itself. Pick the ordering $\Delta=(\alpha,\beta,\gamma,\delta)$
of this vertex. By Theorem \ref{thm:Teichmuller-identification} there
is a hyperbolic structure $J$ on $\Sigma$ so that $\langle J,\Delta\rangle=(2,2,2,2).$
The flips of $\Delta_{0}$ yield a unique nonrepeating series of flips
of $\Delta$ that in turn yield a unique \emph{nonrepeating} series
of Markoff-Hurwitz moves $m_{i}$ preserving $(2,2,2,2)$. By Corollary
\ref{cor:uniqueness} the series of flips has to be empty.
\end{proof}
These results (Corollary \ref{cor:-is-connected.} and Lemma \ref{lem:G-is-acyclic.})
conclude the proof of Theorem \ref{thm:reg-tree} since we established
$G$ is an acyclic connected graph that we also know to be 4-valent.

In the rest of this section we prove that smaller pieces of $G$ are
connected and acyclic. Specifically, for any simplex $\Delta\in Z$
we may form $G_{\Delta}$, the subgraph of $G$ induced by vertices
containing $\Delta$. For example, if $\Delta$ is a 2-simplex then
$G_{\Delta}$ has two vertices and an edge representing a flip between
them. If $\Delta$ is a 3-simplex then $G_{\Delta}$ has only one
vertex, $\Delta$. More generally,
\begin{prop}
\label{prop:local-graph-tree}For all $\Delta\subset Z$, $G_{\Delta}$
is a tree.
\end{prop}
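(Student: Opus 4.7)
The plan is to leverage the fact, already established in Theorem \ref{thm:reg-tree}, that $G$ is a tree: every subgraph of a tree is a forest, so $G_{\Delta}$ is automatically acyclic and it suffices to prove $G_{\Delta}$ is connected.

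First I would dispose of the top-dimensional cases. If $\dim\Delta=3$ then $G_{\Delta}$ is a single vertex. If $\dim\Delta=2$, then $4$-regularity of $G$ forces $\Delta$ to be contained in exactly two $3$-simplices (the second obtained by flipping the unique curve opposite $\Delta$), and they are joined by an edge in $G_{\Delta}$. In both cases $G_{\Delta}$ is visibly a tree.

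For $\dim\Delta\in\{0,1\}$ the key step is to identify $G_{\Delta}$ with the dual graph of $\mathrm{lk}(\Delta,Z)$. The map $\sigma\mapsto\sigma\setminus\Delta$ gives a bijection between the $3$-simplices of $Z$ containing $\Delta$ and the top-dimensional simplices of $\mathrm{lk}(\Delta,Z)$. Moreover, if distinct $3$-simplices $\sigma,\sigma'\supseteq\Delta$ share a $2$-face $\tau$, then necessarily $\tau=\sigma\cap\sigma'\supseteq\Delta$, so $\sigma\setminus\Delta$ and $\sigma'\setminus\Delta$ share the codimension-$1$ face $\tau\setminus\Delta$ in the link. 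Hence $G_{\Delta}$ is exactly the dual graph whose vertices are the top-simplices of $\mathrm{lk}(\Delta,Z)$ and whose edges record pairs sharing a codimension-$1$ face.

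What remains is to show this dual graph is connected in the two cases. For $\dim\Delta=1$ the link is a $1$-complex, contractible by Lemma \ref{lem:LINKS}; the $4$-regularity of $G$ translates to the statement that every $2$-simplex of $Z$ lies in exactly two $3$-simplices, i.e.\ every vertex of the link lies on exactly two of its edges. Thus the link is a doubly infinite path, and its dual graph is again a path. For $\dim\Delta=0$ the link is a $2$-complex, contractible by Lemma \ref{lem:LINKS} (with explicit description as in Figure \ref{fig:vertex_link}), and the same $4$-regularity observation shows every edge of the link lies in exactly two of its triangles. Therefore the link is a connected triangulated surface without boundary (and since contractible, is homeomorphic to $\R^{2}$), and the dual graph of any connected triangulated surface is connected by the standard fact that one can pass between any two triangles by a sequence of edge crossings. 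I expect the only step requiring a little care is this final translation from the topology of the link to the combinatorics of its dual graph; everything else is formal once the correspondence between $G_{\Delta}$ and $\mathrm{lk}(\Delta,Z)$ is in place.
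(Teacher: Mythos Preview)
Your route is quite different from the paper's. The paper does not use Lemma \ref{lem:LINKS} here at all; instead it chooses $J$ with $\langle J,\tilde\Delta\rangle=(2,2,2,2)$, observes that any other ordered $3$-simplex $\tilde\Delta'$ containing $\delta$ then has $\langle J,\tilde\Delta'\rangle=(x_1,x_2,x_3,2)\in V(\Z_+)$, and runs the Hurwitz infinite descent (Theorem \ref{thm:Hurwitz}). Since $2$ is the minimal possible coordinate, the last entry is never the unique largest, so the descent never flips $\delta$ and produces a path in $G_\delta$ from $\Delta'$ back to $\Delta$. Your approach via the dual graph of the link is more purely combinatorial and sidesteps the Teichm\"uller/Markoff input; the paper's approach reuses that machinery and needs no further topology of $Z$.

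Your argument is complete for $\dim\Delta\in\{1,2,3\}$, and the identification of $G_\Delta$ with the dual graph of $\mathrm{lk}(\Delta,Z)$ is correct. For $\dim\Delta=0$ there is a real gap. From ``every edge of $\mathrm{lk}(\delta,Z)$ lies in exactly two triangles'' you infer that the link is a triangulated surface without boundary; that implication is false in general, and it fails here. The link of a vertex $v$ inside $\mathrm{lk}(\delta,Z)$ is exactly $\mathrm{lk}(\{\delta,v\},Z)$, which by your own $\dim\Delta=1$ analysis is a bi-infinite path, not a circle. Thus every vertex of $\mathrm{lk}(\delta,Z)$ has infinite valence, the complex is not locally finite, and with the weak (CW) topology it is not locally Euclidean at any vertex. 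The appeal to ``the dual graph of any connected triangulated surface is connected'' is therefore not available as stated, and you flagged exactly this step as the delicate one.

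The repair is short and stays entirely within your framework. To show the dual graph of a connected pure $2$-complex is connected it suffices that (i) every edge lies in exactly two triangles and (ii) the link of every vertex is a connected $1$-complex. You have (i), and (ii) is precisely what you proved in the $\dim\Delta=1$ case. Given these, join two triangles $T,T'$ by an edge-path $v_0,\dots,v_k$ in the $1$-skeleton (possible since the link is contractible, hence connected); each edge $\{v_i,v_{i+1}\}$ lies in some triangle $S_i$ by (i), and at each $v_i$ the triangles containing $v_i$ form a connected subgraph of the dual graph because that subgraph is the line graph of the connected vertex link. Chaining $T,S_0,S_1,\dots,S_{k-1},T'$ gives the required dual-graph path.
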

\begin{proof}
Since $G$ is acyclic it suffices to prove $G_{\Delta}$ is connected.
We give the proof that $G_{\delta}$ is connected in the case $\delta$
is a vertex of $Z$, the case $\Delta$ is an edge is similar and
we have already discussed the other cases.

Suppose $\delta$ is a vertex of $\Delta,\Delta'\in Z^{3}$. We aim
to connect $\Delta$ to $\Delta'$ by flips that don't touch $\delta$.
Order $\Delta$ and $\Delta'$ so that $\delta$ is the final element
of each. Let $J$ be the hyperbolic structure provided by Theorem
\ref{thm:Teichmuller-identification} such that $\langle J,\Delta\rangle=(2,2,2,2)$.
Since $\Delta'=(\beta_{1},\beta_{2},\beta_{3},\delta)$, $\langle J,\Delta'\rangle=(x_{1},x_{2},x_{3},2)$
for some $x_{1},x_{2},x_{3}\in\Z.$ The infinite descent (Theorem
\ref{thm:Hurwitz}) for $V(\Z_{+})$ now yields a series of flips
that never modifies $\delta$, starts at $\Delta'$ and ends at some
$\Delta''=(\gamma_{1},\gamma_{2},\gamma_{3},\delta)\in Z_{\ord}^{3}$
with $\langle J,\Delta''\rangle=(2,2,2,2).$ Also note that by combining
Theorem \ref{thm:Hurwitz} and Theorem \ref{thm:reg-tree}, there
is a unique $\Delta_{0}\in Z^{3}$ such that $\langle J,\Delta_{0}\rangle=(2,2,2,2)$
for any ordering of $\Delta_{0}$. Therefore up to reordering, $\Delta''=\Delta_{0}=\Delta$
as required.
\end{proof}
There is a nice corollary of Proposition \ref{prop:local-graph-tree}
that may be of independent interest.
\begin{cor}
The curve complex $Z$ has the homotopy type of a point.
\end{cor}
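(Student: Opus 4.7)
The plan is to exhibit $Z$ as a nested union of contractible subcomplexes by using the iterated stellar subdivision description of its $1$-skeleton supplied by Theorem \ref{thm:The--skeleton-of}. Fix any sequence of stellar subdivisions of $2$-faces starting from a tetrahedron that exhausts the $1$-skeleton $Y$ of $Z$, and for $n \geq 0$ let $Y_n$ denote the subgraph obtained after $n$ subdivisions. Since $Z$ is the clique complex of $Y$ and $Y = \bigcup_n Y_n$, setting $Z_n$ to be the clique complex of $Y_n$ gives $Z = \bigcup_n Z_n$.

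The base case $Z_0$ is the clique complex of $K_4$, a single $3$-simplex, hence contractible. For the inductive step, observe that subdividing a $2$-face introduces one brand-new vertex $v$ together with three edges from $v$ to the vertices $\alpha_1, \alpha_2, \alpha_3$ of that face. In passing to the clique complex this creates exactly one new maximal simplex, namely the tetrahedron $T = \{v, \alpha_1, \alpha_2, \alpha_3\}$. Since $v$ is a fresh vertex not appearing in $Z_n$, every face of $T$ containing $v$ is absent from $Z_n$, while every face of $T$ not containing $v$ is a face of the $2$-simplex $\{\alpha_1, \alpha_2, \alpha_3\} \subset Z_n$. Therefore $Z_n \cap T$ is the contractible $2$-simplex $\{\alpha_1, \alpha_2, \alpha_3\}$, and the standard gluing lemma (a contractible space glued to a contractible space along a contractible subcomplex is contractible) yields that $Z_{n+1} = Z_n \cup T$ is contractible. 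Induction shows every $Z_n$ is contractible, and since the inclusions $Z_n \hookrightarrow Z_{n+1}$ are cofibrations of CW complexes, so is their union $Z$.

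I expect the main obstacle to be arranging the attachment order so that each intersection $Z_n \cap T$ remains a single $2$-simplex rather than something more complicated. A naive ordering such as breadth-first search on the dual tree $G$ of Theorem \ref{thm:reg-tree} can fail, because the ``new'' vertex of a freshly attached tetrahedron could already be present in $Z_n$ via an unrelated sequence of flips, producing disconnected or higher-dimensional contributions to $Z_n \cap T$. Scharlemann's subdivision procedure circumvents this difficulty by construction, since at each stage the vertex introduced is genuinely new.
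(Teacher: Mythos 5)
Your argument is correct, and it is a genuinely different route from the paper's. The paper proves this by applying the Nerve Theorem to the cover of the dual tree $G$ by the subgraphs $G_\alpha$ for $\alpha\in Z^0$: the nerve of that cover is $Z$, each intersection is $G_\Delta$, and Proposition \ref{prop:local-graph-tree} supplies the needed contractibility of each $G_\Delta$, so $Z$ is homotopy equivalent to the tree $G$. That argument therefore depends on Theorem \ref{thm:reg-tree}, Proposition \ref{prop:local-graph-tree}, and the Nerve Theorem as black-box ingredients. Your proof instead works directly from Scharlemann's description (Theorem \ref{thm:The--skeleton-of}) and the fact that $Z$ is the clique complex of its $1$-skeleton: each stellar subdivision adds a single fresh vertex $v$ adjacent only to the three vertices of the subdivided face, so passing to the clique complex glues on exactly one $3$-simplex $T$ along the pre-existing $2$-simplex $T\cap Z_n=\{\alpha_1,\alpha_2,\alpha_3\}$; the standard Mayer--Vietoris/van Kampen/Whitehead gluing lemma plus a colimit argument then finishes. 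This is more elementary (no Nerve Theorem, no dual tree) and could be placed immediately after Theorem \ref{thm:The--skeleton-of}, independent of the rest of Section \ref{sec:curve-complex}; the price is that it leans on the explicit combinatorial structure of the subdivision rather than the softer structural facts about $G$. One small remark: your closing caveat about attachment order is not really a live concern, precisely because, as you yourself note, a stellar subdivision always produces a vertex not already present in $Y_n$, so $Z_n\cap T$ is automatically a single $2$-simplex regardless of which admissible enumeration of subdivisions you pick.
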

\begin{proof}
The collection $\{G_{\Delta}:\Delta\:\:\text{\ensuremath{0}-dimensional}\}$
is a cover of $G$ by subcomplexes. The nerve of this cover can be
identified with $Z$, and each finite nonempty intersection of the
covering complexes is $G_{\Delta}$ for $\Delta$ a simplex of $Z$,
and hence is contractible by Proposition \ref{prop:local-graph-tree}.
Therefore the Nerve Theorem \cite[ch. VII, Thm. 4.4]{BROWN} applies to give the result.
\end{proof}

\section{Proof of theorem \ref{thm:main}}

Let $\Gamma$ denote the mapping class group of $\Sigma$. Mapping
classes in $\Gamma$ may permute the punctures of $\Sigma$. The group
$\Gamma$ acts simplicially on $Z$ in the obvious way.

Recall that for each 3-dimensional simplex $\Delta=\{\alpha,\beta,\gamma,\delta\}\in Z$,
there is a unique flip of $\alpha$ that produces a new simplex $\{\alpha',\beta,\gamma,\delta\}$.
Further to this, Huang and Norbury \cite{HN} construct a corresponding
unique mapping class $\gamma_{\Delta}^{1}\in\Gamma$ that maps $\{\alpha,\beta,\gamma,\delta\}$
to $\{\alpha',\beta,\gamma,\delta\}$, similarly $\gamma_{\Delta}^{2}$
performs a flip at $\beta$ and so on. 

The mapping class elements $\gamma_{\Delta}^{i}$ can be extended
to a cocycle for the group action of $\G$ on $Z_{\ord}^{3}$. In
other words, for every $\Delta\in Z_{\ord}^{3}$ and $g\in\G$ there
is a mapping class group element $\gamma(g,\Delta)$ such that $\gamma(g,\Delta)\Delta=g\Delta.$
For example, if $g_{1}$ is the generator of the first factor of $\G$
then $\gamma(g,\Delta)=\gamma_{\Delta}^{1}$.
\begin{prop}
\label{prop:gen-orbit}For any given $\Delta$, if $\tilde{\Delta}\in Z_{\ord}^{3}$
is an ordering of $\Delta$ then the map
\begin{align}
\G & \to Z^{3}\nonumber \\
g & \mapsto\gamma(g,\tilde{\Delta}).\Delta\label{eq:cocycle}
\end{align}
is a bijection.
\end{prop}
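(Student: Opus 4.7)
The plan is to identify the map $g\mapsto\gamma(g,\tilde{\Delta}).\Delta$ as the ``walk'' map from the Cayley graph of $\G=C_{2}^{*4}$ into the $4$-regular tree $G$ of Theorem \ref{thm:reg-tree}. Since $\gamma(g,\tilde{\Delta})$ is by definition a mapping class sending $\tilde{\Delta}$ to $g.\tilde{\Delta}$, the image $\gamma(g,\tilde{\Delta}).\Delta$ is exactly the unordered $3$-simplex underlying $g.\tilde{\Delta}$, and writing $g$ as the unique reduced word $g_{i_{1}}g_{i_{2}}\cdots g_{i_{k}}$ in the $C_{2}$-generators of $\G$, this image is obtained from $\Delta$ by walking in $G$ along the $k$ flips at positions $i_{1},\ldots,i_{k}$.

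The key local step I would verify is that this walk is non-backtracking in $G$. At any vertex $\Delta'=\{\alpha_{1},\alpha_{2},\alpha_{3},\alpha_{4}\}$ of $G$, the four simplices obtained by flipping each $\alpha_{i}$ are distinct --- each flip replaces its target curve by a new curve not in $\Delta'$, and flips at different positions modify different curves --- and by Theorem \ref{thm:reg-tree} they are exactly the four neighbors of $\Delta'$. Thus each generator $g_{i}$ corresponds at every vertex to a well-defined outgoing edge, two consecutive identical generators retrace an edge (since $g_{i}^{2}=e$), and two consecutive distinct generators traverse distinct edges; hence a reduced word yields a non-backtracking walk from $\Delta$ in $G$.

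The proposition now follows from the tree structure of $G$. For injectivity, two distinct reduced words would give two distinct non-backtracking walks from $\Delta$ ending at the same vertex, which would close into a nontrivial cycle, contradicting Theorem \ref{thm:reg-tree}. For surjectivity, by connectedness of $G$, every vertex is reached from $\Delta$ by a path, which may be pruned to the unique non-backtracking path and then transcribed as a reduced word in the generators. The only real content is the local bijection between the four generators of $\G$ and the four edges at each vertex of $G$; with that in hand, (\ref{eq:cocycle}) becomes a covering map between simply connected $4$-regular graphs, hence an isomorphism.
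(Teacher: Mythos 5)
Your proposal is correct and follows the same approach as the paper: both regard the map as a locally injective (indeed locally bijective) graph homomorphism from the Cayley graph of $\G=C_{2}^{*4}$, a $4$-regular tree, to the $4$-regular tree $G$ of Theorem \ref{thm:reg-tree}, and conclude bijectivity. You simply spell out the non-backtracking-walk and covering-map details that the paper leaves implicit.
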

\begin{proof}

The map $g\mapsto\gamma(g,\tilde{\Delta})\Delta$ yields a graph homomorphism
from the Cayley graph of $\G$, a 4-regular tree, to $G$. Recall
that $G$ is also a 4-regular tree by Theorem \ref{thm:reg-tree}.
The homomorphism is locally injective. Therefore $g\mapsto\gamma(g,\tilde{\Delta})\Delta$
is a bijection.
\end{proof}
The next proposition allows us to pass from counting over $G$ to
counting over simple closed curves (our goal), up to finite subsets
at either side of the passage.
\begin{prop}
\label{prop:counting-over-curves} Let $J\in\T(\Sigma)$ and for arbitrary
fixed $\Delta_{0}\in Z_{\ord}^{3}$ let $o:=\langle J,\Delta_{0}\rangle$.
Let $K$ be a compact $S_{4}$-invariant subset of $V(\R_{+})$ containing
the set $K(\G.o)$ from Theorem \ref{prop:There-is-a-set-K}. Since
$K$ is $S_{4}$-invariant, the condition $\langle J,\Delta\rangle\notin K$
is independent of the ordering of $\Delta\in Z^{3}$, and so well
defined. The map 
\begin{align*}
\Phi:\{\:\Delta & \in Z^{3}\::\:\langle J,\Delta\rangle\notin K\:\}\to Z^{0},
\end{align*}
\begin{align}
\Phi:\{\alpha_{1},\alpha_{2},\alpha_{3},\alpha_{4}\} & \mapsto\{\alpha_{i}\}\::\:\ell_{\alpha_{i}}(J)=\max_{1\leq j\leq4}\ell_{\alpha_{j}}(J)\label{eq:longest-curve}
\end{align}

is a well defined injection whose image is all but finitely many elements
of $Z^{0}.$
\end{prop}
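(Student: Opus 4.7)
The plan is to verify three properties of $\Phi$ separately: well-definedness, injectivity, and cofiniteness of the image. Well-definedness is essentially a restatement of Property \textbf{A}: $\Phi(\Delta)$ is a single vertex of $Z$ precisely when $\Delta$ contains a unique longest curve, and since $\ell\mapsto\sqrt{2\sinh(\ell/2)}$ is strictly increasing on $\R_+$, uniqueness of the longest curve is equivalent to uniqueness of the largest coordinate in $\langle J,\Delta\rangle$, which is guaranteed outside $K$ by Property \textbf{A} of Theorem \ref{prop:There-is-a-set-K}.

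For injectivity, I will suppose $\Phi(\Delta)=\Phi(\Delta')=\{\alpha\}$ so that both $\Delta$ and $\Delta'$ lie in $G_\alpha$, which is a tree by Proposition \ref{prop:local-graph-tree}. There is then a unique non-backtracking path $\Delta=\Delta_0\to\Delta_1\to\cdots\to\Delta_n=\Delta'$, each flip at a curve other than $\alpha$. Fixing an ordering of $\Delta_0$ with $\alpha$ in position $1$, each flip corresponds to a Markoff move $m_{j_i}$ with $j_i\in\{2,3,4\}$, and non-backtracking forces $j_{i+1}\neq j_i$. The core inductive claim is that the unique maximum of $\langle J,\Delta_i\rangle$ sits at position $j_i$ for every $i\geq 1$. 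The base case follows from Property \textbf{C} applied to $\Delta_0$ (where the maximum is at position $1\neq j_1$), and the inductive step uses $j_{i+1}\neq j_i$ to ensure that the entry at the newly flipped position is not the current maximum, so Property \textbf{C} places the new maximum at the flipped position. Every move along the path is at a non-(uniquely largest) entry, hence outgoing, so by the final sentence of Theorem \ref{prop:There-is-a-set-K} the whole path stays outside $K$ and Properties \textbf{A}, \textbf{B}, \textbf{C} remain available throughout. If $n\geq 1$ then the maximum of $\langle J,\Delta_n\rangle$ is at position $j_n\in\{2,3,4\}$, not at the position of $\alpha$, contradicting $\Phi(\Delta')=\{\alpha\}$; hence $n=0$ and $\Delta=\Delta'$.

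For cofiniteness of the image, I take $\alpha\in Z^0$ not in the image of $\Phi$ and pick any $\Delta\in G_\alpha$, which is nonempty because $Z$ is pure of dimension $3$. If $\langle J,\Delta\rangle\in K$ then $\alpha$ is a curve of a simplex with coordinates in $K$. Otherwise $\alpha$ is not the longest curve of $\Delta$, and the unique longest curve $\beta\neq\alpha$ can be flipped to produce $\Delta_1\in G_\alpha$ with strictly smaller maximum by Property \textbf{B}. Iterating this descent, either the sequence eventually enters $K$, or it produces infinitely many simplices in $G_\alpha\setminus K$ with strictly decreasing maxima. The latter is impossible because the maxima are bounded below by $x_\alpha(J)>0$ and the coordinate values of points in $\G.o$ are discrete by Lemma \ref{lem:discreteness}. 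Either way $\alpha$ is a curve of some $\Delta^*\in Z^3$ with $\langle J,\Delta^*\rangle\in K$. The set of such $\Delta^*$ is finite, because $K$ imposes a uniform upper bound on the coordinates and hence on the hyperbolic length of any constituent curve, and only finitely many isotopy classes of one-sided simple closed curves on $\Sigma$ have length below a given constant. Consequently the complement of the image is finite.

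The main obstacle is sustaining the inductive hypothesis in the injectivity step: the maximum must migrate to the just-flipped position after every flip and never wander back to the position of $\alpha$. The essential ingredients are the non-backtracking condition $j_{i+1}\neq j_i$ combined with Property \textbf{C}, together with the observation that outgoing moves preserve the property of being outside $K$ (Theorem \ref{prop:There-is-a-set-K}) so that Properties \textbf{A}, \textbf{B}, \textbf{C} remain in force along the entire path.
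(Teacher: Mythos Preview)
Your proof is correct and follows essentially the same route as the paper's. Well-definedness via Property \textbf{A}, injectivity via the unique path in the tree $G_{\alpha}$ from Proposition \ref{prop:local-graph-tree} together with Property \textbf{C} and stability of $\G.o-K$ under outgoing moves, and cofiniteness via Property \textbf{B} descent plus discreteness all match the paper; your cofiniteness argument is phrased as the contrapositive (``not in the image $\Rightarrow$ bad'') of the paper's direct formulation (``good $\Rightarrow$ in the image''), but the content is the same.
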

\begin{proof}
That $\Phi$ is well defined is immediate from Theorem \ref{prop:There-is-a-set-K},
Property \textbf{A}. 

Suppose $\delta\in Z^{0}$ is the longest curve in each of $\Delta,\Delta'$
with respect to $J$, with $\langle J,\Delta\rangle,\langle J,\Delta'\rangle\notin K$.
By Proposition \ref{prop:local-graph-tree} there is a series of flips
taking $\Delta$ to $\Delta'$ and never modifying $\delta$. By Property
\textbf{C} of Theorem \ref{prop:There-is-a-set-K}, the first flip
creates a curve longer than $\delta$ w.r.t. $J$. This continues,
since $\G.o-K$ is stable under outgoing moves, and it is therefore
impossible to reach $\Delta'\neq\Delta$ since $\delta$ is the largest
curve of $\Delta'$, but not of any intermediate simplex of the sequence
that was generated. This establishes injectivity of $\Phi$.

As for the final statement that the image of (\ref{eq:longest-curve})
misses only finitely many curves, let $\delta\in Z^{0}$. We aim to
find $(\alpha',\beta',\gamma,\delta)$ for which $\delta$ is the
longest curve with respect to $J$. Say that $\delta$ is \emph{bad
}if\textbf{ $\langle J,\Delta\rangle\in K$ }for some $\Delta$ containing
$\delta$. Otherwise say $\delta$ is \emph{good}. Since $K$ is compact,
and the set of lengths of one sided simple closed curves in $J$ is
discrete, there are only finitely many bad $\delta$. We will prove
all good $\delta$ are in the image of $\Phi$. For good $\delta$,
begin with any $\Delta\in Z_{\ord}^{3}$ such that $\langle J,\Delta\rangle\notin K$
and $\delta$ is last in $\Delta$. If $\delta$ is the longest curve
of $\Delta$ with respect to $J$ then we are done. Otherwise let
$(x_{1},x_{2},x_{3},x_{4})=\langle J,\Delta\rangle$. Using Property
\textbf{B }of Theorem \ref{prop:There-is-a-set-K}, apply moves at
the largest entries of $(x_{1},x_{2},x_{3},x_{4})$ (which do not
correspond to $\delta$) until $\delta$ becomes the longest curve.
The resulting $(y_{1},y_{2},y_{3},y_{4})=\langle J,\Delta'\rangle$
cannot be in $K$, so we are done since $\delta=\Phi(\Delta').$
\end{proof}

\vspace{-1cm}
We have put all the pieces in place to use the methods of Gamburd,
Magee and Ronan \cite{GMR} to prove Theorem \ref{thm:main}. We now
give an overview of the method of \cite{GMR} and explain how what
we have already proved extends the method to the current setting.

\textbf{Step 1. }\emph{(loc. cit.)} begins with a compact set $K$
such that for $x\in\G.o-K$, properties \textbf{A}, \textbf{B}, and
\textbf{C }hold. Here, we take $K$ to be the set provided by Theorem
\ref{prop:There-is-a-set-K}. It is then deduced from \textbf{A},\textbf{
B},\textbf{ }and \textbf{C }that the number of distinct entries of
$x\in\G.o-K$ cannot decrease during an outgoing move. There is a
further regularization of $K$ in \cite[Section 2.4]{GMR}, by adding
to $K$ a large ball $B_{\ell^{\infty}}(R)$ if necessary, in order
to assume that if for example $x_{1}\leq x_{2}\leq x_{3}\leq x_{4}$
with $(x_{1},x_{2},x_{3},x_{4})\in\G.o-K$ then
\[
x_{3}\geq\frac{1}{2}x_{4}^{\frac{1}{3}},
\]
\[
\frac{3\log(1-2x_{4}^{-\frac{1}{3}})-3\log2}{\log x_{4}}\geq-\frac{1}{2},
\]
and $x_{4}\geq10.$ These inequalities play a role in technical estimates
throughout the proof, in particular, the proof of \cite[Lemma 21]{GMR}.
It is possible to increase $K$ to ensure these hold (and the corresponding
inequalities for other ordering of the coordinates of $x$) for the
same reasons as in \emph{(loc. cit.). }Also, without loss of generality,
$o\in K$.

\textbf{Step 2. }Recall the quantity $n_{J}^{(1)}(L)$ from our main
Theorem \ref{thm:main}. Fix $\Delta_{0}\in Z_{\ord}^{3}$ and let
$o:=\langle J,\Delta_{0}\rangle$. Let $K$ be the enlarged compact
set from Step 1.

Putting Propositions \ref{prop:gen-orbit} and \ref{prop:counting-over-curves}
(for the the current $K$) together gives us
\begin{align}
n_{J}^{(1)}(L) & :=\sum_{\alpha\in Z^{0}}\mathbf{1}\{\ell_{\alpha}(J)\leq L\}\nonumber \\
\text{(Proposition \ref{prop:counting-over-curves})} & =\sum_{\Delta:\:\langle J,\Delta\rangle\notin K}\mathbf{1}\left\{ \max\langle J,\tilde{\Delta}\rangle\leq\sqrt{2\sinh\left(\frac{1}{2}L\right)}\right\} +O_{J}(1)\nonumber \\
\text{(Proposition \ref{prop:gen-orbit})} & =\sum_{g\in\G:\:\G.o\not\notin K}\mathbf{1}\left\{ \max g.o\leq\sqrt{2\sinh\left(\frac{1}{2}L\right)}\right\} +O_{J}(1),\label{eq:count}
\end{align}

where for $\Delta\in Z^{3}$ we wrote $\tilde{\Delta}$ for an arbitrary
lift of $\Delta$ to $Z_{\ord}^{3}$. Since 
\[
\sqrt{2\sinh\left(\frac{1}{2}L\right)}=\sqrt{e^{L/2}-e^{-L/2}}=e^{L/4}(1+O(e^{-L}))
\]
the required asymptotic formula for (\ref{eq:count}) as $L\to\infty$
will follow from an estimate of the form
\begin{equation}
\sum_{g\in\G:\:\G.o\not\notin K}\mathbf{1}\{\max g.o\leq e^{L}\}=c(o)L^{\beta}+o(L^{\beta}).\label{eq:massaged_count}
\end{equation}

Note as in \cite{GMR} that the set $\G.o-K$ breaks up into a finite
union 
\[
\G.o-K=\cup_{i=1}^{N}\O_{i}
\]
where each $\O_{i}$ is the orbit of a point $o_{i}\in\G.o-K$ under
outgoing moves. The points $o_{i}$ are each one move outside of $K$.
The fact there are finitely many $o_{i}$ requires the discreteness
of $\G.o$ and the compactness of $K$. Each $\O_{i}$ has the form
\[
\O_{i}=\{m_{j_{M}}\ldots m_{j_{3}}m_{j_{2}}m_{j_{1}}o_{i}\::\:M\geq0,\:j_{i}\neq j_{i+1},j_{1}\neq j_{0}(i)\}
\]
where $j_{0}(i)$ is such that $m_{j_{0}(i)}o_{i}\in K$, or in other
words, $m_{j_{0}(i)}$ is not outgoing on $o_{i}$. It can be deduced
from \textbf{A, B, C} and preceding remarks\textbf{ }that each orbit
$\O_{i}$ can be identified with a subset $\G_{i}\subset\G$ via a
bijection
\[
g\in\G_{i}\mapsto g.o\in\O_{i}.
\]
Moreover the $\G_{i}$ are disjoint. Therefore
\begin{equation}
\begin{aligned}\sum_{g\in\G:\:\G.o\not\notin K}\mathbf{1}\{\max g.o\leq e^{L}\} & = & \sum_{i=1}^{N}\sum_{g\in\G_{i}}\mathbf{1}\{\max g.o\leq e^{L}\}\\
 & = & \sum_{i=1}^{N}\sum_{x\in\O_{i}}\mathbf{1}\{\max x\leq e^{L}\}.
\end{aligned}
\label{eq:temp1}
\end{equation}

This reduces the count for $n_{J}^{(1)}(L)$ to a count for each of
a finite number of orbits under outgoing moves in a region where \textbf{A},
\textbf{B} and \textbf{C} hold.

\textbf{Step 3. }The methods of \cite{GMR} now take over, with one
important thing to point out. A version of Lemma \ref{lem:alpha_bound_below}
is crucially used during the proof of \cite[Lemma 20]{GMR}. In that
instance \cite{GMR} can make a better bound than we have\footnote{Since in \cite{GMR} we were concerned with integer points, this meant
after ruling out certain special cases, it allowed us to take $\epsilon=1$
in Lemma \ref{lem:alpha_bound_below}.}, but what is really important is the existence of the uniform $\epsilon>0$
in Lemma \ref{lem:alpha_bound_below}. This establishes a weaker,
but qualitatively the same, version of \cite[Lemma 20]{GMR} that
plays the same role in the proof. The rest of the arguments of \cite{GMR}
go through without change to establish
\begin{thm}[Gamburd-Magee-Ronan, adapted]
\label{thm:GMR-addapted}For each $1\leq i\leq N$ there is a constant
$c(\O_{i})>0$ such that
\[
\sum_{x\in\O_{i}}\mathbf{1}\{\max x\leq e^{L}\}=c(\O_{i})L^{\beta}+o(L^{\beta}).
\]
\end{thm}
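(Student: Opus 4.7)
\emph{Proof plan.} The strategy is to identify $\O_i$ with a set of reduced words in the free product $\G$ and then invoke the transfer-operator/renewal machinery of \cite{GMR}. By Step 2 above, $\O_i$ is in bijection with a subset $\G_i\subset\G$ of reduced words $w=m_{j_M}\cdots m_{j_1}$ satisfying $j_1\neq j_0(i)$, so that the stated asymptotic is equivalent to
\[
N_i(T):=\#\{w\in\G_i:\log\max(w.o_i)\leq T\}=c(\O_i)T^\beta+o(T^\beta).
\]

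The first step is to extract the asymptotic dynamics of $L(w):=\log\max(w.o_i)$ under appending an outgoing letter. If $x=w.o_i$ has unique largest entry $x_k$ (Property \textbf{A}) and we apply $m_j$ with $j\neq k$ (outgoing), the identity $(m_j(x))_j=\prod_{i\neq j}x_i-x_j$ together with Property \textbf{C} yields
\[
L(wm_j)-L(w)=\sum_{\substack{i\neq j\\ i\neq k}}\log x_i + O\!\left(\frac{x_j}{\prod_{i\neq j}x_i}\right),
\]
an error that decays geometrically in $L(w)$ by virtue of the bounds $\max x\geq 10$ and $x_3\geq\frac{1}{2}x_4^{1/3}$ built into the regularization of $K$ in Step 1. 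Rescaling coordinates by $\log\max x$ then exhibits the forward orbit of $o_i$ as a piecewise-smooth expanding dynamical system on a compact subset of the $3$-simplex, symbolically coded by the generators of $\G$ subject to the Markov rule $j_{k+1}\neq j_k$.

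Given this coding, one assembles a family of Ruelle transfer operators $\mathcal{L}_s$ on H\"older functions over the symbol space, twisted by the cocycle $e^{-sL(w)}$. Perron--Ruelle--Frobenius theory supplies a unique simple leading eigenvalue $\lambda(s)$ that is real-analytic and strictly decreasing in real $s$, and the critical exponent is characterized by $\lambda(\beta)=1$; its identification with Baragar's $\beta$ is already carried out in \cite{BARAGAR2,GMR}. The Dirichlet-type generating function $\sum_{w\in\G_i}e^{-sL(w)}$ can be written through $(I-\mathcal{L}_s)^{-1}$, has a unique simple pole at $s=\beta$ in the relevant half-plane, and a contour shift combined with a Wiener--Ikehara-type Tauberian step (exactly as in \cite{GMR}) produces $N_i(T)=c(\O_i)T^\beta+o(T^\beta)$ with $c(\O_i)>0$ equal to the residue.

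The main obstacle lies in the weaker coordinate separation available in our continuous setting: Lemma \ref{lem:alpha_bound_below} provides only $x_ix_j\geq 2+\epsilon$ with $\epsilon=\epsilon(\O_i)>0$, whereas \cite{GMR} exploits the cleaner bound $x_ix_j\geq 3$ at the key distortion estimate of \cite[Lemma 20]{GMR}. One therefore has to rerun \cite[Lemmas 20 and 21]{GMR} with this orbit-dependent $\epsilon$, verifying that the resulting loss in the contraction of the transfer cocycle is absorbed by the regularization of $K$ in Step 1 and does not move the critical exponent. Once this qualitatively equivalent replacement for \cite[Lemma 20]{GMR} is in place, I expect the remaining arguments of \cite{GMR} to transfer verbatim and deliver the theorem.
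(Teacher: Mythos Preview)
Your proposal is correct and follows essentially the same route as the paper. The paper's own argument for this theorem is precisely what you outline in your final two paragraphs: one invokes the machinery of \cite{GMR} wholesale, with the single caveat that Lemma~\ref{lem:alpha_bound_below} supplies only $x_ix_j\geq 2+\epsilon$ for an orbit-dependent $\epsilon>0$ in place of the integer bound used in \cite[Lemma~20]{GMR}, and one checks that this qualitatively weaker estimate still suffices for the rest of the argument to go through unchanged. Your more explicit sketch of the \cite{GMR} mechanism (logarithmic coordinates, symbolic coding, transfer operator with critical parameter $\beta$) goes beyond what the paper spells out, but the identification of the sole point of adaptation is identical.
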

Using Theorem \ref{thm:GMR-addapted} in (\ref{eq:temp1}) completes
the proof of Theorem \ref{thm:main}.

\bibliographystyle{alpha}

Michael Magee,\\
Department of Mathematical Sciences,\\
Durham University,\\
Lower Mountjoy, DH1 3LE Durham,\\
United Kingdom \\
\tt{michael.r.magee@durham.ac.uk}
\end{document}